\newtheorem{theorem}{Theorem}%[section]
\theoremstyle{definition}
\newtheorem{lemma}[theorem]{Lemma}
\newtheorem{algorithm}{Algorithm}
\newtheorem{example}{Example}
\newtheorem{num_ex}{Numerical Example}
   \newtheorem{assumption}{Assumption}
\newcommand{\eqdef}{\overset{\mbox{\tiny def}}{=}}
\newcommand{\R}{\mathbb{R}}
\newcommand{\Z}{\mathbb{Z}}
\newcommand{\E}{\mathbb{E}}
\newcommand{\eqdist}{\overset{\mathcal{D}}{=}}
\newcommand{\EG}{\mathcal{A}}
\title{An Efficient Finite Difference Method for Parameter Sensitivities of Continuous Time Markov Chains}
\author{David F. Anderson\footnote{Department of Mathematics, University of
  Wisconsin, Madison, Wi. 53706, anderson@math.wisc.edu, grant support from NSF-DMS-1009275.}}
\begin{document}

\maketitle

%Notes: 17 pages.  2 Figures, 3 tables.
\begin{abstract}

We present an efficient finite difference method for the computation of parameter sensitivities that is applicable to a wide class of continuous time Markov chain models.   The estimator for the method is constructed by coupling the perturbed and nominal processes in a natural manner, and the analysis proceeds by utilizing a martingale representation for the coupled processes. The variance of the resulting estimator is shown to be an order of magnitude lower  due to the coupling.   We conclude that the proposed method produces an estimator with a lower variance than other methods, including the use of Common Random Numbers, in most situations.  Often the variance reduction is substantial. The method is no harder to implement than any standard continuous time Markov chain algorithm, such as ``Gillespie's algorithm.''  The motivating class of models, and the source of our examples,  are the stochastic chemical kinetic models commonly used in the biosciences, though other natural application areas include  population processes and queuing networks.

\end{abstract}

  \footnotetext[0]{AMS 2000 subject classifications: Primary 60H35, 65C99; Secondary 92C40}

\noindent
\textbf{Keywords:} Finite differences, variance reduction, parameter sensitivities,  next reaction method, Gillespie, random time change, continuous time Markov chain, martingale. 

\section{Introduction}
\label{sec:intro}

We develop a new finite difference method for the computation of parameter sensitivities that is applicable to a wide class of continuous time Markov chain models.  For $k \in \{1,\dots,M\}$,  let $\zeta_k\in \R^d$ denote the possible transition directions for a continuous time Markov chain, and let $\lambda_k:\R^d \to \R$ denote the respective intensity, or propensity functions.\footnote{In the language of probability theory, the functions $\lambda_k$ are nearly universally termed intensity functions, whereas in the biosciences they are nearly universally termed propensity functions.}
  The  random time change representation of Kurtz for the model is 
\begin{equation}
	X(t) = X(0) + \sum_{k = 1}^M Y_k\left( \int_0^t \lambda_k(X(s)) ds\right) \zeta_k,
	\label{eq:main}
\end{equation}
where the $Y_k$ are independent, unit-rate Poisson processes.  See, for example, \cite{KurtzPop81},
\cite[Chapter 6~]{Kurtz86}, or the recent survey \cite{AndKurtz2011}.  The infinitesimal generator for the model \eqref{eq:main} is the operator $\EG$ satisfying
\begin{equation*}
  (\EG f)(x) = \sum_{k} \lambda_k(x)(f(x + \zeta_k) - f(x)),
\end{equation*}
where $f : \R^d \to \R$ is chosen from a sufficiently large class of functions.  Without loss of generality, we assume throughout that the state space of the process, $\mathcal S$, is a subset of $\Z^d$.

Consider a family of models \eqref{eq:main} indexed by a set of parameters, which we denote by the vector $\theta$.  Even when there are good theoretical reasons for believing the model is a reasonable description of some phenomenon, usually the parameters are not known precisely and have to be estimated experimentally. Depending on the setup and the parameters in question, it may be  difficult to obtain good estimates. Thus, it is important to analyze how sensitive features of interest in the model are to variation in the  parameters. 
For ease of exposition we take $\theta$ to be a scalar, though note that it is trivial to extend all of the ideas of the paper to the setting of $\theta \in \R^\ell$, for some $\ell > 0$.  

We let $f : \mathcal{S} \to \R$ be a function of the state of the system that gives a measurement of interest.   For example, $f$ could be the abundance of one of the components at a particular time.  Define $J(\theta) \eqdef \E f(X^{\theta}(t)),$ where the $\theta$ dependence is being made explicit.  The problem of interest is to efficiently approximate $J'(\theta)$.

 There are a number of methods that can be used for the computation of such parameter sensitivities in this setting, including finite differences, likelihood ratios and Girsanov transformations, and infinitesimal perturbation analysis, each with its own benefits and drawbacks; see for example  \cite{GlynnAsmussen2007,Komorowski2011, Plyasunov2007, Khammash2010}.  We  focus on finite difference methods, which due to its simplicity, is the most popular choice.   Specifically, in this paper a new finite difference method is introduced that is easy to implement, analytically tractable, and typically produces an estimate with a given tolerance with substantially lower computational complexity than that obtained using the other methods currently known to the author.

While  continuous time Markov chain models of the general form  \eqref{eq:main} are used ubiquitously in both industry and the sciences to model natural phenomenon ranging from population processes to queueing networks, we feel the method developed here will be most useful in the study of stochastic models of biochemical reaction networks.  We will therefore choose the language of biochemistry throughout, and also choose this area as the setting for our examples.

 A biochemical reaction network is a chemical system involving multiple reactions and chemical species.   If the abundances of the constituent molecules of a reaction network
are sufficiently high then their concentrations are typically modeled by
a coupled set of ordinary differential equations.  If, however, the
abundances are low then the standard deterministic models do not
provide a good representation of the behavior of the system and
stochastic models are used.   The simplest stochastic models of such networks \cite{Kurtz72, McQuarrie67}  treat the system as a continuous time Markov chain with the state, $X$, being the number of molecules of each species and with reactions modeled as possible transitions of the chain.  More explicitly, if the $k$th reaction happens at time $t$, then the system is updated by the {\em reaction vector} $\zeta_k$,
 \begin{equation*}
 	X(t) = X(t-) + \zeta_k.
 \end{equation*}
 Letting $\lambda_k: \R^d \to \R$ denote the intensity, or propensity, of the $k$th reaction, we see this stochastic model satisfies \eqref{eq:main}.

 As will be pointed out in the following sections, the strategy being proposed here is in some ways similar to the common reaction path (CRP) method  proposed  in \cite{Khammash2010}, which is also a quite capable estimator for finite differences.  However, there are important differences.  First, the actual coupling, and hence simulation, of the relevant  processes is different.  The coupling proposed here tends to provide an estimator with a lower variance, especially when the process is considered for moderate to large times.
 %On all examples considered by the authors, the coupling proposed here, and found in equation \eqref{eq:main_coupling}, is tighter than that used in \cite{Khammash2010}, see equation \eqref{eq:Rathinam}, and will  produce an estimator with a lower variance.  
 %Further, the magnitude of the difference between the quality of the estimators will grow as the overall intensity (as quantified by the values $\int_0^t \lambda_k(X(s))ds)$, see \eqref{eq:RTC_exact}) of the system increases.  
 Second,  the coupling proposed here lends itself to analysis more readily than that used in \cite{Khammash2010} as the centered counting processes used in our coupling are martingales with respect to the natural filtration of the process \cite{AndersonGangulyKurtz}. Third, the method being proposed here is as easy to implement as the usual Gillespie algorithm or next reaction method.  The strategy employed in \cite{Khammash2010}, on the other hand, requires being quite careful with the seeds of the pseudo-random number generators used since one independent seed is required per reaction channel per sample path generated.   Finally, the coupling proposed here essentially converts the problem of generating two paths of a continuous time Markov chain into a problem of generating one path of a different continuous time Markov chain with an enlarged state space.  Therefore, all analytical and computational techniques developed for the study of continuous time Markov chains, of which there are many, will  be employable on this larger system, and therefore applicable to the problem of computing sensitivities.  
  
 The most common finite difference coupling used today for the approximation of sensitivities is probably an implementation of Gillespie's algorithm plus using common random numbers (CRN).  We will also discuss this coupling and give the relevant stochastic representation of it.  We will  conclude that the coupling proposed here will produce a lower variance estimator than CRN for the same reasons that it produces a lower variance estimator than the CRP scheme of \cite{Khammash2010}.
 
  The main goals of this paper are to introduce the new method and to provide the mathematical analysis of the expected squared difference between the two relevant processes, though some relevant examples will also be provided.  
 In Section \ref{sec:formal_model}, we formally introduce our mathematical model of interest, including all technical assumptions.     In Section \ref{sec:new_methods}, we develop our new finite difference estimator and provide sharp analytical bounds. We also discuss the long time behavior of the introduced estimator and compare it with both CRP and CRN.  We conclude that the proposed method will be quite superior for moderate and large time scales.   In Section \ref{sec:examples},  we provide  examples demonstrating our main results. 
   
 %We close with a short conclusion.

\section{The Formal Setup}
\label{sec:formal_model}

%\subsection{The formal model}
We consider the family of models
\begin{equation}
	X^{\theta}(t) = X^{\theta}(0) + \sum_{k = 1}^M Y_k\left( \int_0^t \lambda^{\theta}_k(X^{\theta}(s)) ds\right) \zeta_k,
	\label{eq:main_indexed}
\end{equation}
where the $Y_k$ are independent, unit-rate Poisson processes, the vector $\theta$ represents a given choice of parameters that we are making explicit in the notation, and all other notation is as before.  The assumption that there are a finite number of possible jump directions $\zeta_k$ can almost certainly be weakened.  However, this assumption makes the analysis significantly cleaner and all the motivating models (such as those arising from biochemistry) naturally satisfy such a condition.  We define $F^{\theta}:\Z^d \to \R^d$ by
\begin{equation*}
	F^{\theta}(z) \eqdef \sum_k \lambda_k^{\theta}(z)\zeta_k.
\end{equation*}	
 We make the following running assumptions throughout the remainder of the paper.  The first is that the intensity functions are uniformly (in $\theta$) globally Lipschitz.  The second is that the intensity functions scale at least linearly with perturbations to $\theta$.

\begin{assumption}	\label{assump:GL}
	We suppose that there is a $K_1>0$ for which 
	\begin{equation*}
		|\lambda_k^{\theta}(x) - \lambda_k^{\theta}(y)| +  |F^{\theta}(x) - F^{\theta}(y)|  \le K_1|x-y|, 
	\end{equation*}
	for all $k$, $\theta$ of interest, and all $x,y\in \mathcal S$. 
\end{assumption}

\begin{assumption}\label{assump:theta}
	We suppose there is a $K_2>0$ so that for all $k$ and all $\epsilon<1$, 
	\begin{equation*}
		\sup_{x\in{\mathcal S}} \left[ | \lambda_k^{\theta + \epsilon}(x) - \lambda_k^{\theta}(x)|+ |F^{\theta+\epsilon}(x) - F^{\theta}(x)|\right] \le K_2 \epsilon.
	\end{equation*}
\end{assumption}

Assumption \ref{assump:GL} can almost certainly be weakened to a local Lipschitz condition, in which case analytical methods similar to those found in \cite{Hutzenthaler2011} and/or \cite{MattinglyStuart} can be applied.  Proving our main results in such generality, while possible and certainly worth doing in future work, will be significantly messier and we feel the main points of the analysis will be lost.   
Note that  Assumption \ref{assump:GL} automatically holds if $\mathcal S$ is a bounded set.  In the chemical setting, mild assumptions on the intensity functions ensure that the non-negative orthant is forward invariant.  Therefore, $\mathcal S$ is bounded if there is a vector $\xi \in \Z^d_{>0}$ for which $\xi\cdot \zeta_k \le 0$ for all $k$.  For example, such a $\xi$ exists if mass is conserved.   Assumption \ref{assump:GL}  also holds if the intensity functions are simply set to zero outside of a compact subset of $\Z^d$, which has the same effect as analyzing the standard model up until a stopping time $\tau$, defined to be the time the processes leaves a given compact set (essentially using a localization argument).

Another relevant situation in which Assumption \ref{assump:GL} holds is when the equivalent \textit{scaled} models are analyzed.  While we point the reader to \cite{AndersonGangulyKurtz, AndersonHigham2011, AndMaso2011, KangKurtz2011} for a thorough description of this model in the biosciences, we will briefly discuss it here.  For some parameter of the system, $N$, we let $X^N$ denote the process with $i$th component $X_i^N = X_i/N^{\alpha_i}$, where the $\alpha_i \ge 0$ are chosen so that $X_i^N = O(1)$.  Under mild assumptions on the intensity functions $\lambda_k$, it can be shown that $X^N$ satisfies 
\begin{equation}\label{eq:main_multi}
	X^N(t) = X^N(0) + \sum_k Y_k\left( \int_0^t N^{\beta_k + \nu_k \cdot \alpha} \lambda_k(X^N(s)) ds\right) \zeta_k^N,
\end{equation}
where $\zeta_{k,i}^N = \zeta_{k,i}/N^{\alpha_i}$, and $\beta_k$ is chosen so that $ \lambda_k(X^N(\cdot)) = O(1)$.  This scaled model is $O(1)$ and therefore more readily satisfies Assumption \ref{assump:GL}, where now it is understood that the state space is
\begin{equation*}
	{\mathcal S}^N \eqdef \{z \in \R^d\ | \ z_i = x_iN^{-\alpha_i}, x \in {\mathcal S}\}.
\end{equation*}
In many applications, it is more relevant to compute expectations and parameter sensitivities of the scaled model \eqref{eq:main_multi} than the unscaled version \eqref{eq:main}, though we do not revisit this point in the current paper.

\subsection{The basic problem and the benefits of variance reduction}
\label{sec:theWhy}

We let $f : \Z^d_{\ge 0} \to \R$ be a function of the state of the system which gives a measurement of interest and define 
 \begin{equation*}
   J(\theta) \eqdef \E f(X^{\theta}(t)).
\end{equation*}
 The problem of interest is to efficiently estimate $J'(\theta)$, where we recall that we are making the simplifying assumption  that $\theta$ is one-dimensional.
 
 To estimate $J'(\theta)$  the centered finite difference is often used:  
\begin{equation}
	J'(\theta) \approx \frac{\E f(X^{\theta + \epsilon/2}(t)) - \E f(X^{\theta - \epsilon/2}(t))}{\epsilon},
	\label{eq:centered}
\end{equation}
as its bias is $O(\epsilon^2)$ \cite{GlynnAsmussen2007}.  That is,
\begin{align*}
	J'(\theta) &= \frac{\E f(X^{\theta + \epsilon/2}(t)) - \E f(X^{\theta - \epsilon/2}(t))}{\epsilon} + O(\epsilon^2).
\end{align*}
This should be compared with the forward difference, which has a bias of $O(\epsilon)$
\begin{align*}
	J'(\theta) &= \frac{\E f(X^{\theta + \epsilon}(t)) - \E f(X^{\theta}(t))}{\epsilon} + O(\epsilon).
\end{align*}
%Due to the order of magnitude difference in the bias, we will only consider the centered difference in this paper.  However, we note that none of the ideas being presented depend upon this choice, and the same variance reduction will be obtained if the forward difference is instead used.
The estimator for \eqref{eq:centered} using centered finite differences is 
\begin{equation}
D_R(\epsilon) = \frac{1}{R} \sum_{i = 1}^R d_{[i]}(\epsilon), 
\label{eq:difference}
\end{equation}
with
\begin{equation}
	d_{[i]}(\epsilon)= \frac{f(X_{[i]}^{\theta + \epsilon/2}(t)) - f(X_{[i]}^{\theta - \epsilon/2}(t))}{\epsilon},
	\label{eq:d}
\end{equation}
where $X_{[i]}^{\theta}$ represents the $i$th path generated with parameter choice $\theta$, and $R$ is the number of paths generated.  If $X_{[i]}^{\theta + \epsilon/2}(t)$ and $X_{[i]}^{\theta - \epsilon/2}(t)$ are computed independently, the variance of $d_{[i]}(\epsilon)$ is $O(\epsilon^{-2})$, and, hence, the variance of $D_R(\epsilon)$ is $O(R^{-1} \epsilon^{-2})$.  Note that
\begin{align*}
	\E (D_R(\epsilon) - J'(\theta))^2 &= \mathsf{Var}(D_R(\epsilon)) + (\E D_R(\epsilon) - J'(\theta))^2\\
	&= O\left(R^{-1}\epsilon^{-2}\right) + O\left(\epsilon^4\right),
\end{align*}
which, for a given $R$, is minimized when $\epsilon = O(R^{-1/6})$, at a value that is $O(R^{-2/3})$.  Therefore, the optimal convergence rate to the exact value, in the sense of confidence intervals, is $O(R^{-1/3})$  \cite{GlynnAsmussen2007}.  

Many computations are performed with a target variance (which yields a target size of the confidence interval).  Denoting the target variance by $V^*$, we see that the number of paths required is then approximated by the  solution to
\[
	\mathsf{Var}\left( \frac{1}{R} \sum_{i = 1}^R d_{[i]}(\epsilon) \right) = \frac{1}R \mathsf{Var}(d(\epsilon)) = V^* \implies R = \frac{1}{V^*}\mathsf{Var}(d(\epsilon)).
\]
Thus,   decreasing the variance of $d(\epsilon)$  lowers the computational complexity (total number of computations)  required to solve the problem.
The basic idea of coupling, in the context of this paper, is to lower the variance of $d(\epsilon)$ by simulating $X^{\theta + \epsilon/2}$ and $X^{\theta - \epsilon/2}$ simultaneously so that the two processes are highly correlated or ``coupled.''  That is, instead of generating paths independently, we want to generate a \textit{pair} of paths $(X^{\theta + \epsilon/2}, X^{\theta - \epsilon/2})$ so that for appropriate choices of $f$, the variance of $f(X^{\theta + \epsilon/2}) - f(X^{\theta - \epsilon/2})$ is reduced.  The basic idea of any such coupling is to reuse, or share, some portion of the driving ``noise'' in the generation of each process.  As already alluded to in the Introduction, one such finite difference method that achieved a substantial reduction in variance due to coupling can be found in \cite{Khammash2010}, which we discuss in more detail in later sections.

In Section \ref{sec:results}, we will develop a new coupling technique so that the variance of $d_{[i]}(\epsilon)$ in  \eqref{eq:d} is $O(\epsilon^{-1})$, a full order of magnitude lower (in $\epsilon$) than when the paths were generated independently.  This will lead to a finite difference method  with an optimal convergence rate, in the sense of the above paragraph, of $O(R^{-2/5})$, achieved when $\epsilon = O(R^{-1/5})$.  More importantly, however, the variance of the estimator \eqref{eq:difference} will be $O(R^{-1}\epsilon^{-1})$, which should be compared with a variance of $O(R^{-1}\epsilon^{-2})$ when independent paths are used.  Thus, the number of paths (and  computational complexity) required to solve a given problem will be reduced by an order of $\epsilon$.

\section{Coupled finite differences}
\label{sec:new_methods}

In  Section \ref{sec:results}, we discuss how to couple the requisite processes for the coupled finite difference method being proposed here.  In Section \ref{sec:analytical_results}, we provide sharp bounds on the variance of the estimator.

 \subsection{Coupling the processes}
\label{sec:results}

Whether using the forward or centered difference, the main problem is to intelligently produce two paths generated from systems whose parameters differ by an order of $\epsilon$.  A good coupling should satisfy three things:
\begin{enumerate}[$(i)$]
\item it should minimize the variance of the difference \eqref{eq:d},
\item it should be easy to simulate, and 
\item it should be analytically tractable.  
\end{enumerate}
We will show that the coupling \eqref{eq:main_coupling} below satisfies each of these requirements, however we begin by motivating the coupling by two simpler problems that capture the core idea.  

We consider the problem of trying to understand the difference between $Z_1(t)$ and $Z_2(t)$, where $Z_1,Z_2$ are Poisson processes with rates $13.1$ and $13$, respectively.  We let $Y_1$  and $Y_2$ be independent unit-rate Poisson processes, and set 
\begin{align*}
	Z_1(t) &= Y_1(13 t) + Y_2(0.1 t)\\
	Z_2(t) &= Y_1(13t),
\end{align*}
where we use the additivity property of Poisson processes.  The important point to note is that both processes $Z_1$ and $Z_2$ are using the process $Y_1(13t)$ to generate simultaneous jumps.  The process $Z_1$ then uses the auxiliary process $Y_2(0.1 t)$ to jump the extra times that $Z_2$ does not.  The processes $Z_1$, $Z_2$ will jump together the vast majority of times, and in this way be very tightly coupled.\footnote{In this case, the long-run percentage of jumps that are shared can be quantified precisely as $13/(13 + 0.1) \approx 0.99923$.}  The coupling above also already hints at the main points of the mathematical analysis that will be carried out in Section \ref{sec:analytical_results} as
\begin{align*}
	Z_1(t) - Z_2(t) = Y_2(0.1t),
\end{align*}
and so,
\begin{align*}
	\E | Z_1(t) - Z_2(t)| &= \E Y_2(0.1t) = 0.1t\\
	\E (Z_1(t) - Z_2(t))^2 &= \E Y_2(0.1t)^2 = 0.1t + 0.01t^2.
\end{align*}

More generally, if $Z_1$ and $Z_2$ are  
non-homogeneous Poisson processes with intensities 
$f(t)$ and $g(t)$, respectively, then we could let 
$Y_1$, $Y_2$, and $Y_3$ be independent, 
unit-rate Poisson processes and define
\begin{align*}
	Z_1(t) &= Y_1\left(\int_0^t f(s) \wedge g(s) ds\right) 
       + Y_2\left( \int_0^t f(s) - 
           \left( f(s) \wedge g(s)  \right) ds\right),\\
	Z_2(t) &=  Y_1\left(\int_0^t f(s) \wedge g(s) ds\right) 
     + Y_3\left( \int_0^t g(s) - \left(f(s) \wedge g(s)  
                           \right)ds\right),
\end{align*}
where we are using that, for example,
\begin{equation*}
Y_1\left(\int_0^t f(s) \wedge g(s) ds\right) + 
        Y_2\left( \int_0^t f(s) - 
             \left( f(s) \wedge g(s)\right) ds\right) 
            \eqdist Y\left( \int_0^t f(s) ds\right),
\end{equation*}
where $Y$ is a unit rate Poisson process and we define
$a\wedge b \eqdef \min\{a,b\}$.   Thus, we are coupling the processes by splitting up the intensity functions into two pieces, one shared
\[
f(s) \wedge g(s),
\]
and the other not, and then using the same noise, $Y_1$, on the shared portion.

We return to the problem of coupling the main processes of interest to us.  For ease of notation, we will  couple the processes $X^{\theta+ \epsilon}$ and $X^{\theta}$ as opposed to $X^{\theta + \epsilon/2}$ and $X^{\theta-\epsilon/2}$, with the understanding that generating the centered difference is performed in the obvious manner.  
We generate our coupled processes ($X^{\theta+ \epsilon},X^{\theta})$ via:  
\begin{align}\label{eq:main_coupling}
\begin{split}
	&X^{\theta + \epsilon}(t) = X^{\theta + \epsilon}(0) + \sum_k Y_{k,1}\left( \int_0^t \lambda_k^{\theta + \epsilon}(X^{\theta + \epsilon}(s)) \wedge \lambda_k^{\theta}(X^{\theta}(s))ds\right) \zeta_k\\
	&\hspace{1.25in}+ \sum_k Y_{k,2}\left( \int_0^t \lambda_k^{\theta + \epsilon}(X^{\theta + \epsilon}(s)) - \lambda_k^{\theta + \epsilon}(X^{\theta + \epsilon}(s)) \wedge \lambda_k^{\theta}(X^{\theta}(s))ds\right) \zeta_k\\
	&X^{\theta}(t) = X^{\theta}(0) + \sum_k Y_{k,1}\left( \int_0^t \lambda_k^{\theta + \epsilon}(X^{\theta + \epsilon}(s)) \wedge \lambda_k^{\theta}(X^{\theta}(s)) ds\right) \zeta_k\\
	&\hspace{0.99in} + \sum_k Y_{k,3}\left( \int_0^t \lambda_k^{\theta}(X^{\theta}(s)) - \lambda_k^{\theta + \epsilon}(X^{\theta + \epsilon}(s)) \wedge \lambda_k^{\theta}(X^{\theta}(s))ds\right) \zeta_k,
	\end{split}
\end{align}
where the $Y_{k,i}$ are unit-rate Poisson processes and all other notation is as before.  Thus, and just as in the example pertaining to the non-homogeneous Poisson processes above, the effect of the intensity function $\lambda_k^{\theta + \epsilon}$ on the process $X^{\theta + \epsilon}$ has been split into two pieces: one of size $\lambda_k^{\theta + \epsilon}(X^{\theta + \epsilon}(s)) \wedge \lambda_k^{\theta}(X^{\theta}(s))ds$, and  one of size
\[
\lambda_k^{\theta + \epsilon}(X^{\theta + \epsilon}(s)) - \lambda_k^{\theta + \epsilon}(X^{\theta + \epsilon}(s)) \wedge \lambda_k^{\theta}(X^{\theta}(s))ds.
\]
Further, since the two processes $X^{\theta + \epsilon}$ and $X^{\theta}$ share the contribution of each of the terms with intensity 
\[
   \lambda_k^{\theta + \epsilon}(X^{\theta + \epsilon}(s)) \wedge \lambda_k^{\theta}(X^{\theta}(s))ds
\]
we expect them to be highly correlated.  It is important to note that the marginal processes have the same distributions as the respective processes generated via \eqref{eq:main_indexed}.  This fact can be seen by noting that \eqref{eq:main_coupling} is a continuous time Markov chain and that the transition rates of the marginal  processes are identical to those of \eqref{eq:main_indexed} with the corresponding rate constants.  Note also that the coupling \eqref{eq:main_coupling} is essentially the same as in the toy problems above where we coupled $Z_1$ and $Z_2$.  

A coupling similar to \eqref{eq:main_coupling} first appeared in \cite{Kurtz82}.  More recently, it was used in \cite{AndersonGangulyKurtz} to study the strong error of different approximation methods in the discrete stochastic case, and  in  \cite{AndersonHigham2011} to generate paths so as to apply multi-level Monte Carlo techniques in the continuous time Markov chain setting.  The application of the coupling \eqref{eq:main_coupling} towards the problem of parametric sensitivity analysis is  the main contribution of this paper.  

As discussed at the end of Section \ref{sec:intro},  the process $(X^{\theta + \epsilon},X^{\theta})$ satisfying \eqref{eq:main_coupling} is a continuous time Markov chain with state space $\Z^d \times \Z^d$.  Therefore, all analytical and computational techniques developed for the study of continuous time Markov chains will be applicable to this system, and, hence, to the problem of computing sensitivities.  

Before proceeding with the analysis, we give the algorithm for generating a path $(X^{\theta + \epsilon},X^{\theta})$ via \eqref{eq:main_coupling}.  We note that the method below is the next reaction method applied to \eqref{eq:main_coupling} \cite{Anderson2007a, Gibson2000}.  See \cite{Anderson2007a} for a thorough explanation of how the next reaction method is equivalent to simulating representations of the forms considered here.    Below, we will denote a uniform$[0,1]$ random variable by rand$(0,1)$, and we remind the reader that if $U \sim \text{rand}(0,1)$, then $\ln(1/U)$ is an exponential random variable with a parameter of one.  All random variables generated are assumed to be independent of each other and all previous random variables.  It is assumed that the processes start with the same initial condition, though this can be weakened in the obvious manner.  Finally, we note that it is also possible to simulate the continuous time Markov chain \eqref{eq:main_coupling} by the obvious adaption of Gillespie's direct, or optimized direct, algorithm.  While we do not formally provide that algorithm here, it will be problem specific as to which implementation (Gillespie versus next reaction method) is more efficient.  

\begin{algorithm}
   [Simulation of the representation \eqref{eq:main_coupling}]
\textbf{Initialize}.  Set $X^{\theta + \epsilon} = X^{\theta}  =x$ and $t = 0$.   For each $k$ and $i \in \{1,2,3\}$, set 
\begin{itemize}
	\item $P_{k,i} = \ln(1/u_{k,i})$, where $u_{k,i}$ is rand$(0,1)$.
	\item $T_{k,i} = 0$.
\end{itemize}  
Repeat the following steps:
  \begin{enumerate}[$(i)$]
  \item For each $k$, set
  	\begin{itemize}
  		\item $A_{k,1} = \lambda_k^{\theta + \epsilon}(X^{\theta + \epsilon}) \wedge \lambda_k^{\theta}(X^{\theta})$.
		\item $A_{k,2} =  \lambda_k^{\theta + \epsilon}(X^{\theta + \epsilon}) - A_{k,1}$.
		\item $A_{k,3} = \lambda_k^{\theta}(X^{\theta}) - A_{k,1}$.
  	\end{itemize}
\item  For each $k$ and $i \in \{1,2,3\}$, set 
\begin{equation*}
   \Delta t_{k,i} = \left\{ \begin{array}{cr}
   (P_{k,i} - T_{k,i})/A_{k,i}, & \text{ if } A_{k,i} > 0\\
   \infty, & \text{ if } A_{k,i} = 0
   \end{array} \right. .
 \end{equation*}
\item Set $\Delta = \min_{k,i}\{\Delta t_{k,i}\}$, and let $\mu \equiv \{k,i\}$ be the indices where the minimum is achieved.

\item Set $t = t + \Delta$.
\item Update state vectors according to reaction $\zeta_{\mu}$ (where minimum occurred in step $(iii)$):
\begin{align*}
	(X^{\theta + \epsilon},X^{\theta}) = \left\{ \begin{array}{cc}
		(X^{\theta + \epsilon},X^{\theta}) + (\zeta_k,\zeta_k), & \text{if } i = 1\\
		(X^{\theta + \epsilon},X^{\theta}) + (\zeta_k,0), & \text{if } i = 2\\
		(X^{\theta + \epsilon},X^{\theta}) + (0,\zeta_k), & \text{if } i = 3\\
	\end{array} \right. .
\end{align*}
\item For each $k$ and $i \in \{1,2,3\}$, set $T_{k,i} = T_{k,i} + A_{k,i}\times \Delta$.
\item Set $P_{\mu} = P_{\mu} + \ln(1/u)$, where $u$ is rand$(0,1)$.
\item Return to step $(i)$ or quit.
  \end{enumerate}
  \label{alg:NRM}
\end{algorithm}
 
Note that at most two of $A_{k,1}, A_{k,2}, A_{k,3}$ will be non-zero at each step.  Further, it will often be that $A_{k,1} \gg \max\{A_{k,2}, A_{k,3}\}$ and the processes will move together the vast majority of the time (which is, of course, the whole point of such a coupling), showing that the cost of generating the  path $(X^{\theta+\epsilon}, X^{\theta})$ will be  less than the cost of generating two paths via the representation \eqref{eq:main_indexed}.  This fact is observed in the data collected on the numerical examples in Section \ref{sec:examples}.

\subsubsection*{The Common Reaction Path method}
We now revisit the point that the strategy being proposed here is similar to the one proposed  in \cite{Khammash2010}, where instead of the coupling \eqref{eq:main_coupling} the authors used what is equivalent to
\begin{align}\label{eq:Rathinam}
\begin{split}
	X^{\theta + \epsilon}(t) &= X^{\theta + \epsilon}(0) + \sum_k Y_{k}\left( \int_0^t \lambda_k^{\theta + \epsilon}(X^{\theta + \epsilon}(s))ds\right) \zeta_k\\
	X^{\theta}(t) &= X^{\theta}(0) + \sum_k Y_{k}\left( \int_0^t  \lambda_k^{\theta}(X^{\theta}(s)) ds\right) \zeta_k,
	\end{split}
\end{align}
where the $Y_k$ are independent unit-rate Poisson processes and all other notation is as before.  The key point is that they are using the same Poisson processes for the generation of each path.  As stated in Section \ref{sec:intro}, the estimator built with paths generated via \eqref{eq:Rathinam} is quite capable in many circumstances.  The main differences between their method and the one being proposed in this paper via \eqref{eq:main_coupling} are:
\begin{enumerate}
	\item The processes generated via \eqref{eq:main_coupling} are generally coupled tighter than those generated via \eqref{eq:Rathinam}, resulting in a lower variance for the estimator, sometimes substantially so.  However, sometimes the coupling \eqref{eq:Rathinam} produces a lower variance estimator than \eqref{eq:main_coupling} when the terminal time $T$ is small.  These facts will be demonstrated via example in Section \ref{sec:examples} and discussed more below.
	\item The model \eqref{eq:main_coupling} is more amenable to analysis as the centered counting processes of \eqref{eq:main_coupling} are martingales with respect to the natural filtration \cite{AndersonGangulyKurtz}, which is not the case for \eqref{eq:Rathinam}.  
  
	%Further, and as discussed in Example \ref{ex:mRNA} below,  the discrepancy between the variances of the estimators will grow larger as $T_k(t) = \int_0^t \lambda_k(X(s))ds$ grows.  As it is precisely for those models with large $T_k(t)$ that efficient models are most needed, this fact will sometimes make the coupling \eqref{eq:main_coupling} significantly more desirable than the coupling \eqref{eq:Rathinam}.
	\item Implementation of \eqref{eq:main_coupling} is simpler than that of \eqref{eq:Rathinam} as \eqref{eq:main_coupling} does not require the generation of many independent seeds for the pseudo-random number generator.  In fact, simulation of \eqref{eq:main_coupling} is no more challenging than simulating any continuous time Markov chain.
	\item The coupling \eqref{eq:main_coupling} makes the problem of computing the difference between two paths into one of computing a single path of a different continuous time Markov chain with an enlarged state space.  
\end{enumerate}

The following example is chosen to highlight the advantages of the coupling \eqref{eq:main_coupling} over that of \eqref{eq:Rathinam}.

\begin{example}\label{ex:mRNA}
Consider the simple model in which an mRNA molecule is created and degraded
\begin{equation}
	\emptyset \overset{\theta}{\underset{0.1}{\rightleftarrows}} M,
	\label{simple}
\end{equation}
which is equivalent to an $M/M/\infty$ queue with arrival rate $\theta$ and service rate $0.1$.  Here we are using the common convention of putting the rate constant of a reaction next to the corresponding reaction vector.
We suppose that  we want to understand the sensitivity of the expected number of mRNA molecules with respect to the parameter $\theta\approx 2$.   We  consider how the different representations \eqref{eq:main_coupling} and \eqref{eq:Rathinam} ``should'' behave on this model, whose representation via \eqref{eq:main_indexed} is
\begin{equation}\label{eq:mRNA}
	X^\theta(t) = X(0) + Y_1\left( \theta t\right) - Y_2\left( \int_0^t 0.1 X^\theta(s) ds\right).
\end{equation}
For $\epsilon>0$, let  $(X^{\theta + \epsilon},X^\theta)$ satisfy \eqref{eq:main_coupling}, which for this example is
\begin{align*}
	X^{\theta+\epsilon}(t) &= X^{\theta+\epsilon}(0) + Y_{1,1} (\theta t)+ Y_{1,2}(\epsilon t) \\
	&\hspace{.4in}  -  Y_{2,1}\left( \int_0^t 0.1 X^\theta(s) ds\right) - Y_{2,2}\left(\int_0^t 0.1 (X^{\theta + \epsilon}(s) - X^\theta(s)) ds\right)\\
	X^{\theta}(t) &= X^{\theta}(0) + Y_{1,1} (\theta t) -  Y_{2,1}\left( \int_0^t 0.1 X^\theta(s) ds\right), 
\end{align*}
where we have used that with this coupling $X^{\theta+\epsilon}(t) \ge X^\theta(t)$, for all $t \ge 0$ if $\epsilon \ge 0$.  %Note that such an inequality does not hold for the representation \eqref{eq:Rathinam}. 
Therefore, assuming that $X^{\theta + \epsilon}(0)=X^\theta(0)$, we have
\begin{align*}
	X^{\theta + \epsilon}(t) - X^\theta(t) = Y_{1,2}(\epsilon t)  -  Y_{2,2}\left(\int_0^t 0.1 (X^{\theta + \epsilon}(s) - X^\theta(s)) ds\right).
\end{align*}
Setting $Z^{\theta,\epsilon} = X^{\theta + \epsilon} - X^\theta$, we see that $Z^{\theta,\epsilon}$ itself can be viewed as the solution to \eqref{eq:mRNA}, though with zero initial condition and input rate $\epsilon$.  The mean and variance can be solved for as functions of time and satisfy
\begin{align}
	\E Z^{\theta,\epsilon}(t) &= \E (X^{\theta + \epsilon}(t) - X^{\theta}(t)) =  \frac{\epsilon}{0.1}(1 - e^{-0.1 t}) \label{eq:mean11}\\
	\mathsf{Var}(Z^{\theta,\epsilon}(t)) &= \mathsf{Var}(X^{\theta + \epsilon}(t) - X^{\theta}(t))= \frac{\epsilon}{0.1}(1 - e^{-0.1t}).\label{eq:var11}
\end{align}

On the other hand, if $(X^{\theta + \epsilon},X^\theta)$ satisfy the coupling \eqref{eq:Rathinam}, then
\begin{align*}
	X^{\theta+\epsilon}(t) &= X^{\theta+\epsilon}(0) + Y_{1} (\theta t + \epsilon t)  -  Y_{2}\left( \int_0^t 0.1 X^{\theta +\epsilon}(s) ds\right)\\
	X^{\theta}(t) &= X^{\theta}(0) + Y_{1} (\theta t) -  Y_{2}\left( \int_0^t 0.1 X^\theta (s) ds\right), 
\end{align*}
and 
\begin{equation}
	X^{\theta+\epsilon}(t) - X^{\theta}(t) = Y_{1} (\theta t + \epsilon t)  -Y_{1}(\theta t) - \left[ Y_{2}\left( \int_0^t 0.1 X^{\theta +\epsilon}(s) ds\right)  -  Y_{2}\left( \int_0^t 0.1 X^\theta (s) ds\right)\right].\label{diff_bad}
\end{equation}
In this case, we still have that $\E [X^{k + \epsilon}(t) -X^k(t)]$ satisfies the right hand side of \eqref{eq:mean11}.  However, the variance can not be calculated with such ease as for \eqref{eq:var11}.  We note, however, for large $t$, we will have  $\theta t +\epsilon t \gg \theta t$, and therefore anticipate 
\begin{equation*}
	\int_0^t 0.1X^{\theta+\epsilon}(s) ds \gg \int_0^t 0.1 X^\theta(s)ds,
\end{equation*}
implying that the two processes $X^{\theta + \epsilon}$ and $X^{\theta}$ should decouple, and behave independently.  This is demonstrated in a numerical example in Section \ref{sec:examples} where we show the variance of the difference \eqref{diff_bad} converges to 40, which is the same as if $X^{\theta + \epsilon}$ and $X^{\theta}$ were generated independently, and {\em substantially} larger than the bound given in \eqref{eq:var11} for small $\epsilon$.  

As will be discussed immediately below, we also expect to see this ``decoupling'' when  Gillespie's algorithm is implemented with common random numbers (CRN).  This also will be demonstrated by example in Section \ref{sec:examples}. \hfill $\square$
\end{example}

The above example gives a heuristic as to why the coupling \eqref{eq:main_coupling} will tend to give a  lower variance than \eqref{eq:Rathinam}.  
%First, denote 
%\begin{align*}
%	T_{k,1}(t) &= \int_0^t \lambda_k^{\theta + \epsilon}(X^{\theta + \epsilon}(s)) \wedge \lambda_k^{\theta}(X^{\theta}(s))ds\\
%	T_{k,2}(t) &=  \int_0^t \lambda_k^{\theta + \epsilon}(X^{\theta + \epsilon}(s)) - \lambda_k^{\theta + \epsilon}(X^{\theta + \epsilon}(s)) \wedge \lambda_k^{\theta}(X^{\theta}(s))ds\\
%	T_{k,3}(t) &= \int_0^t \lambda_k^{\theta}(X^{\theta}(s)) - \lambda_k^{\theta + \epsilon}(X^{\theta + \epsilon}(s)) \wedge \lambda_k^{\theta}(X^{\theta}(s))ds.
%\end{align*}
%These values determine where the process is \textit{in the time frame of the Poisson process}. 
For processes generated by \eqref{eq:main_coupling}, whenever $X^{\theta + \epsilon}(t) \approx X^{\theta}(t)$ during the course of the simulation the processes have  re-coupled, regardless of the the history of the process up to that time.  On the other hand, if $X^{\theta + \epsilon}$ and $X^{\theta}$ are generated via \eqref{eq:Rathinam}, then $X^{\theta + \epsilon} \approx X^{\theta}$ need not imply
\[
	\int_0^t \lambda_k^{\theta + \epsilon}(X^{\theta + \epsilon}(s)) ds  \approx \int_0^t \lambda_k^{\theta + \epsilon}(X^{\theta}(s) ds),
\]
and so the two processes could be exploring completely different portions of the Poisson processes.  Thus, even when $X^{\theta + \epsilon}(t) = X^{\theta}(t)$ in the course of the simulation of \eqref{eq:Rathinam}, the integrated intensities will not be equal and so the processes are no longer coupled as tightly as they were at time zero.  As time increases, this problem could get worse and the processes can decouple completely (as happens in the example above).

\subsubsection*{Common Random Numbers and Gillespie's algorithm}
The standard method of using common random numbers in the implementation of Gillespie's algorithm will suffer the same defect as \eqref{eq:Rathinam} in that for large times the coupled processes can decouple.  To understand why, we need to give the correct representation for Gillespie's algorithm (which is equivalent to simulating the embedded discrete time Markov chain).  The following representation can be found in \cite{AndKurtz2011}.  We define 
\[
  \lambda_0(x)=\sum_k \lambda_k(x),\quad \text{ and } \quad q_k(x)=\sum_{i=1}^k\lambda_
  i(x)/\lambda_0(x).
\]
Let $Y$ be a unit rate Poisson process and let $\{\xi_i\}$ be an i.i.d.  sequence of uniform(0,1) random variables that are also independent of $Y$.  Then let $X$ satisfy 
\begin{align}\label{eq:sim_Gill}
\begin{split}
  R_0(t )&= Y\left(\int_0^t\lambda_0(X(s))ds\right)\\
  X(t) &= X(0)+\sum_k \zeta_k \int_0^t{\bf 1}_{(q_{k-1}(X(s-)),q_k(X(s
    -)]}(\xi_{R_0(s-)})dR_0(s).
    \end{split}
\end{align}
The counting process $R_0$ is determining the jump times, which are seen to be exponential random variables with parameter $\lambda_0(X(s-))$.  The uniform random variables are then used to select which reaction occurs, with the $k$th reaction being chosen with probability $\lambda_k(X(s-))/\lambda_0(X(s-))$.
Simulation of \eqref{eq:sim_Gill} is called Gillespie's algorithm (or simply: simulating the embedded discrete time Markov chain).
The standard common random number + Gillespie algorithm finite difference method, which is probably the most common coupling method used today in the context of finite differences, then  consists of using the same 
$Y$ and choice of $\{\xi_i\}$ for the construction of $X^{\theta}$ and $X^{\theta + \epsilon}$, and will decouple for the same reasons as that of \eqref{eq:Rathinam}.  This is demonstrated numerically in an example in Section \ref{sec:examples}.

\subsubsection*{Naive couplings: a cautionary tale}

At this point it may be tempting to try to couple the processes generated via \eqref{eq:main_coupling} even tighter by using the same Poisson processes for each of $Y_{k,2}$ and $Y_{k,3}$.  This would, in effect, be a highbred version of the Common Reaction Path and Coupled Finite Difference methods.  That is, one may be tempted to use
\begin{align}\label{eq:bad_coupling}
\begin{split}
	&X^{\theta + \epsilon}(t) = X^{\theta + \epsilon}(0) + \sum_k Y_{k,1}\left( \int_0^t \lambda_k^{\theta + \epsilon}(X^{\theta + \epsilon}(s)) \wedge \lambda_k^{\theta}(X^{\theta}(s))ds\right) \zeta_k\\
	&\hspace{.7in}+ \sum_k Y_{k,2}\left( \int_0^t \lambda_k^{\theta + \epsilon}(X^{\theta + \epsilon}(s)) - \lambda_k^{\theta + \epsilon}(X^{\theta + \epsilon}(s)) \wedge \lambda_k^{\theta}(X^{\theta}(s))ds\right) \zeta_k\\
	&X^{\theta}(t) = X^{\theta}(0) + \sum_k Y_{k,1}\left( \int_0^t \lambda_k^{\theta + \epsilon}(X^{\theta + \epsilon}(s)) \wedge \lambda_k^{\theta}(X^{\theta}(s)) ds\right) \zeta_k\\
	&\hspace{.7in} + \sum_k Y_{k,2}\left( \int_0^t \lambda_k^{\theta}(X^{\theta}(s)) - \lambda_k^{\theta + \epsilon}(X^{\theta + \epsilon}(s)) \wedge \lambda_k^{\theta}(X^{\theta}(s))ds\right) \zeta_k,
	\end{split}
\end{align}
where we are now using the same Poisson process for all of the auxiliary processes.
In fact, this does not work:  the marginal distributions of $(X^{\theta+\epsilon},X^{\theta})$ as generated by \eqref{eq:bad_coupling} are not the same as the original processes and so this coupling should \textit{not} be used. In fact, the marginal distributions can be so different that the coupled processes will converge to the wrong value as $\epsilon \to 0$.  This fact is best demonstrated by an example.

\begin{example}\label{ex:bad_example}
	Consider the system arising from the single reaction
	\[
		X \overset{\theta}{\to} \emptyset,
	\]
	with $X_0=1$. The stochastic equation of the form \eqref{eq:main} governing this system is
	\[
		X^{\theta}(t) = 1 - Y\left(\theta \int_0^t X^{\theta}(s) ds\right). 
	\]
	Hence, the process can only take the values  of one and zero and $\E X^{\theta}(1) = \exp\{-\theta\}$, which implies $\displaystyle \frac{d}{d\theta} \E X^{\theta}(1) = - \exp\{-\theta\}$.  In particular, 
	\[
		 \frac{d}{d\theta} \E X^{\theta}(1)\bigg|_{\theta = 1} = - e^{-1}.
	\]
	For the coupling \eqref{eq:bad_coupling} for this model we have
	\begin{align*}
		\epsilon^{-1} \E [X^{1 + \epsilon/2} - X^{1-\epsilon/2}] = -e^{-1} \frac{2}{2+\epsilon} \left( e^{\epsilon/2} - e^{-\epsilon/2}\right) = -e^{-1} \epsilon + O(\epsilon^2),
	\end{align*}
	which converges to zero as $\epsilon \to 0$. 
	Perhaps the simplest way to compute the above expectation is to find the condition on the first jump times of the underlying Poisson processes that guarantee the random variable $X^{1 + \epsilon/2} - X^{1-\epsilon/2}$ takes a value of negative one.  This event has a probability of $O(\epsilon^2)$, implying the result. This computation is left to the interested reader.\hfill $\square$
\end{example}

\subsection{Analytical results}
\label{sec:analytical_results}

The following theorem is the main analytical result of this paper and allows us to conclude that for any function $f$ satisfying the assumptions of Theorem \ref{thm:main} and $(X^{\theta + \epsilon},X^{\theta})$ satisfying \eqref{eq:main_coupling}
\begin{align*}
	\mathsf{Var}\left(f(X^{\theta+ \epsilon}(t)) - f(X^{\theta}(t))\right) \le C_{t,f,M} \epsilon,
\end{align*}
for some $C_{t,f,M}>0$ depending upon $t$, $f$, and $M$ (the number of reactions).

\begin{theorem}\label{thm:main}
	Suppose $(X^{\theta + \epsilon},X^{\theta})$ satisfy \eqref{eq:main_coupling} with our running Assumptions \ref{assump:GL} and \ref{assump:theta}. Let $f:\R^d \to \R$ be a $C^1$ function with bounded first derivative on all $x \in \mathcal S$.  Then, for any $T>0$ there is a $C_{T,f,M} > 0$ for which
	\begin{equation*}
		 \E \sup_{t\le T} \left( f(X ^{\theta + \epsilon} (t)) - f(X^{\theta}(t))\right)^2 \le C_{T,f,M} \epsilon.
	\end{equation*}
\end{theorem}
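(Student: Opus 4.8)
The plan is to reduce the statement to a bound on the coupled difference $D(t) \eqdef X^{\theta+\epsilon}(t) - X^{\theta}(t)$ and then to estimate its moments through the martingale structure of \eqref{eq:main_coupling}. Since $f$ is $C^1$ with first derivative bounded (say by $L$) on the state space, it is Lipschitz there, so $(f(X^{\theta+\epsilon}(t)) - f(X^{\theta}(t)))^2 \le L^2|D(t)|^2$ and it suffices to prove $\E\sup_{t\le T}|D(t)|^2 \le C_{T,M}\epsilon$. Writing $a_k(s)$ and $b_k(s)$ for the rates attached to $Y_{k,2}$ and $Y_{k,3}$ in \eqref{eq:main_coupling}, subtracting the two lines of \eqref{eq:main_coupling} gives $D(t) = \sum_k \zeta_k\left[Y_{k,2}(\int_0^t a_k(s)\,ds) - Y_{k,3}(\int_0^t b_k(s)\,ds)\right]$, and I record the identities $a_k(s)-b_k(s) = \lambda_k^{\theta+\epsilon}(X^{\theta+\epsilon}(s)) - \lambda_k^{\theta}(X^{\theta}(s))$ and $a_k(s)+b_k(s) = |\lambda_k^{\theta+\epsilon}(X^{\theta+\epsilon}(s)) - \lambda_k^{\theta}(X^{\theta}(s))|$. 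Combining Assumptions \ref{assump:GL} and \ref{assump:theta} via the triangle inequality yields the running estimate $|\lambda_k^{\theta+\epsilon}(X^{\theta+\epsilon}(s)) - \lambda_k^{\theta}(X^{\theta}(s))| \le K_1|D(s)| + K_2\epsilon$, and the same bound with $F$ in place of $\lambda_k$. As a preliminary I would use Assumption \ref{assump:GL} (linear growth plus a localization/Gronwall argument) to record non-explosion and $\E\sup_{t\le T}(|X^{\theta+\epsilon}(t)|^2 + |X^{\theta}(t)|^2) < \infty$, which legitimizes the martingale and Dynkin manipulations below.

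The heart of the argument, and what I expect to be the main obstacle, is the first-moment bound $\E|D(t)| \le C\epsilon$ — crucially of order $\epsilon$, not merely $\sqrt\epsilon$. This is delicate: decomposing $D$ into its drift and martingale parts and applying the triangle inequality fails, because the martingale part has $L^1$-norm of order $\sqrt{\text{(its compensator)}} = O(\sqrt\epsilon)$, so $\sqrt{\E|D|^2}$ is only $O(\sqrt\epsilon)$ while $\E|D|$ can genuinely be $O(\epsilon)$ (as Example~\ref{ex:mRNA} shows). The fix is to apply Dynkin's formula directly to $|D(t)|$: because $(X^{\theta+\epsilon},X^{\theta})$ is a continuous time Markov chain in which $D$ jumps by $+\zeta_k$ at rate $a_k$ and by $-\zeta_k$ at rate $b_k$ (the shared $Y_{k,1}$ jumps leave $D$ unchanged), the generator applied to $g(D)=|D|$ equals $\sum_k a_k(|D+\zeta_k|-|D|) + \sum_k b_k(|D-\zeta_k|-|D|) \le \sum_k (a_k+b_k)|\zeta_k|$ by the triangle inequality. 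Taking expectations the martingale contribution vanishes, and the running estimate gives $\E|D(t)| \le C'\int_0^t (K_1\E|D(s)| + K_2\epsilon)\,ds$, so Gronwall's inequality yields $\E|D(t)| \le C\epsilon$ on $[0,T]$. The point is that the total decoupling rate $\sum_k(a_k+b_k) = \sum_k|\lambda_k^{\theta+\epsilon}(X^{\theta+\epsilon}) - \lambda_k^{\theta}(X^{\theta})|$ is itself $O(K_1|D| + K_2\epsilon)$, so the expected number of un-shared jumps accumulates only linearly.

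With the first-moment bound in hand the second moment closes routinely. Applying Dynkin to $g(D)=|D|^2$ gives generator $2\,D\cdot(F^{\theta+\epsilon}(X^{\theta+\epsilon}) - F^{\theta}(X^{\theta})) + \sum_k(a_k+b_k)|\zeta_k|^2$; the first term is bounded by $2K_1|D|^2 + 2K_2\epsilon|D|$ and the second by $C_\zeta(K_1|D| + K_2\epsilon)$ for a constant $C_\zeta$ depending on $\{\zeta_k\}$ and $M$, so after taking expectations and inserting $\E|D(s)| \le C\epsilon$ I obtain $\E|D(t)|^2 \le 2K_1\int_0^t \E|D(s)|^2\,ds + O(\epsilon)$, whence $\E|D(t)|^2 \le C\epsilon$ by Gronwall. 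To promote this to the supremum I would use the decomposition $D = A + N$, with drift $A(t) = \int_0^t (F^{\theta+\epsilon}(X^{\theta+\epsilon}(s)) - F^{\theta}(X^{\theta}(s)))\,ds$ and martingale $N = \sum_k\zeta_k(M_{k,2} - M_{k,3})$ built from the compensated Poisson processes. Cauchy–Schwarz in time gives $\E\sup_{t\le T}|A(t)|^2 \le T\int_0^T \E(K_1|D(s)| + K_2\epsilon)^2\,ds = O(\epsilon)$ from the pointwise second-moment bound, while orthogonality of the independent compensated Poisson processes gives $\E|N(T)|^2 = \sum_k|\zeta_k|^2\,\E\int_0^T (a_k+b_k)\,ds = O(\epsilon)$ from the first-moment bound, and Doob's $L^2$ maximal inequality turns this into $\E\sup_{t\le T}|N(t)|^2 = O(\epsilon)$. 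Summing the two contributions yields $\E\sup_{t\le T}|D(t)|^2 \le C_{T,M}\epsilon$, and the Lipschitz reduction of the first paragraph completes the proof.
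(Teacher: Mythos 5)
Your proposal is correct and follows the same overall architecture as the paper's proof: reduce to the coupled difference $D(t)=X^{\theta+\epsilon}(t)-X^{\theta}(t)$ via the Lipschitz property of $f$, prove an $L^1$ bound of order $\epsilon$ first (the paper's Lemma \ref{lem:L1}), feed that into the compensator of the un-shared jumps to get the $L^2$ bound of order $\epsilon$ (Lemma \ref{lem:L2}), with Gronwall closing each step. You have also correctly isolated the one idea that makes the whole thing work, namely that the total decoupling rate $\sum_k(a_k+b_k)=\sum_k|\lambda_k^{\theta+\epsilon}(X^{\theta+\epsilon})-\lambda_k^{\theta}(X^{\theta})|$ is itself bounded by $K_1|D|+K_2\epsilon$, so the $L^1$ estimate is genuinely $O(\epsilon)$ and then controls the quadratic variation in the second-moment step. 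The only real divergence is in execution of the martingale estimates: the paper works with the semimartingale decomposition $D=M^{\theta,\epsilon}+\int_0^{\cdot}\bigl(F^{\theta+\epsilon}(X^{\theta+\epsilon})-F^{\theta}(X^{\theta})\bigr)$ throughout and controls $\E\sup_{t\le T}|M^{\theta,\epsilon}(t)|$ directly by Burkholder--Davis--Gundy together with the integer-valuedness trick $\sqrt{z}\le z$ (inequality \eqref{eq:need_L1}), so the supremum is carried through both lemmas; you instead apply Dynkin's formula to $|D|$ and $|D|^2$ to get pointwise moment bounds and recover the supremum only at the end via Cauchy--Schwarz on the drift and Doob's $L^2$ inequality on the compensated-Poisson martingale. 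Both routes are valid; yours is arguably slightly more elementary (no BDG needed, only Doob in $L^2$ and the orthogonality of the independent compensated Poisson integrals), while the paper's gives the stronger $\E\sup_{t\le T}|D(t)|$ bound of Lemma \ref{lem:L1} as a standalone statement. Your remarks on integrability/non-explosion to justify the Dynkin manipulations are appropriate and are implicitly needed in the paper's argument as well.
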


We provide two Lemmas, giving the $L^1$ and $L^2$ bound on the difference between $X^{\theta + \epsilon}$ and $X^{\theta}$, before proving Theorem \ref{thm:main}.

\begin{lemma}\label{lem:L1}
	Suppose $(X^{\theta + \epsilon},X^{\theta})$ satisfy \eqref{eq:main_coupling} with our running Assumptions \ref{assump:GL} and \ref{assump:theta}. Then, for $T>0$ there is a $C_{T,M} > 0$ for which
	\begin{equation*}
		\E \sup_{t \le T} \left| X ^{\theta + \epsilon} (t) - X^{\theta}(t)\right| \le C_{T,M} \epsilon.
	\end{equation*}
\end{lemma}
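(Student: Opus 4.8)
The plan is to work directly with the difference process $D(t) \eqdef X^{\theta+\epsilon}(t) - X^{\theta}(t)$ and to control it through a \emph{monotone domination} rather than through a quadratic-variation estimate, since the latter would only deliver an $O(\sqrt{\epsilon})$ bound. First I would subtract the two lines of \eqref{eq:main_coupling}. Because both processes use the same $Y_{k,1}$ evaluated at the identical shared argument $\int_0^t \lambda_k^{\theta+\epsilon}(X^{\theta+\epsilon}(s)) \wedge \lambda_k^{\theta}(X^{\theta}(s))\,ds$, those terms cancel exactly, leaving (with equal initial conditions, $D(0)=0$)
\[
   D(t) = \sum_k \zeta_k\left[ Y_{k,2}\Big(\textstyle\int_0^t b_k(s)\,ds\Big) - Y_{k,3}\Big(\textstyle\int_0^t c_k(s)\,ds\Big)\right],
\]
where $b_k(s) = \lambda_k^{\theta+\epsilon}(X^{\theta+\epsilon}(s)) - \lambda_k^{\theta+\epsilon}(X^{\theta+\epsilon}(s)) \wedge \lambda_k^{\theta}(X^{\theta}(s))$ and $c_k(s)$ is the analogous $Y_{k,3}$-rate. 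The identity $a+b-2(a\wedge b)=|a-b|$ then gives the crucial simplification $b_k(s)+c_k(s)=|\lambda_k^{\theta+\epsilon}(X^{\theta+\epsilon}(s))-\lambda_k^{\theta}(X^{\theta}(s))|$, which by the triangle inequality, Assumption \ref{assump:GL} (Lipschitz in the state, uniform in $\theta$), and Assumption \ref{assump:theta} (linear scaling in $\epsilon$, for $\epsilon<1$) is bounded by $K_1|D(s)| + K_2\epsilon$.

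The observation that produces the sharp $O(\epsilon)$ rate is that each term $Y_{k,2}(\int_0^t b_k\,ds)$ and $Y_{k,3}(\int_0^t c_k\,ds)$ is nondecreasing in $t$, so the total jump count dominates the supremum of $|D|$ directly:
\[
   \sup_{s\le t}|D(s)| \le \sum_k |\zeta_k|\left[ Y_{k,2}\Big(\textstyle\int_0^t b_k(r)\,dr\Big) + Y_{k,3}\Big(\textstyle\int_0^t c_k(r)\,dr\Big)\right].
\]
This is the step that sidesteps Doob/BDG: counting jumps is \emph{linear} in the intensity, whereas routing through the quadratic variation $\langle M\rangle$ introduces a square root. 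Taking expectations and using the martingale property $\E\, Y_{k,i}(\int_0^t \mu\,dr)=\E\int_0^t \mu\,dr$ (justified by localization below) yields
\[
   \E \sup_{s\le t}|D(s)| \le \sum_k |\zeta_k|\, \E\int_0^t \big(b_k(r)+c_k(r)\big)\,dr.
\]

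Substituting the pointwise bound $b_k+c_k \le K_1|D(r)|+K_2\epsilon$ and setting $g(t)\eqdef \E\sup_{s\le t}|D(s)|$, together with $|D(r)|\le \sup_{s\le r}|D(s)|$, gives an integral inequality of the form $g(t) \le C_M\int_0^t (K_1\,g(r)+K_2\epsilon)\,dr$. Gronwall's inequality then produces $g(T)\le C_{T,M}\epsilon$, which is the claim. To make the interchange $\E\,Y(\int \mu)=\E\int\mu$ rigorous, which requires $\E\int_0^t b_k\,dr<\infty$, I would run the whole argument up to the stopping time $\tau_n=\inf\{t:|X^{\theta+\epsilon}(t)|\vee|X^{\theta}(t)|\ge n\}$, obtain the bound for $g_n(t)=\E\sup_{s\le t\wedge\tau_n}|D(s)|$ uniformly in $n$, and send $n\to\infty$; non-explosivity under Assumption \ref{assump:GL} gives $\tau_n\to\infty$ and monotone convergence upgrades the localized estimate to the stated one.

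The main obstacle is securing exponent $1$ on $\epsilon$ rather than $1/2$. The tempting route of centering the Poisson processes and applying Doob's $L^2$ inequality controls $\E\sup|M|^2$ by $\E\langle M\rangle \le C\int_0^t(K_1 g+K_2\epsilon)\,dr = O(\epsilon)$, but then $\E\sup|M| = O(\sqrt{\epsilon})$, too weak for an $L^1$ statement. Recognizing $D$ as a pure-jump process whose jump count is monotone and linearly controlled by the integrated intensities is precisely what rescues the linear rate; this monotone-domination device is the crux of the lemma.
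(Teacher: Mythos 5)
Your proof is correct, and it reaches the sharp $O(\epsilon)$ rate by a genuinely different mechanism than the paper. The paper decomposes $X^{\theta+\epsilon}-X^{\theta}$ into a martingale plus a drift term $\int_0^s F^{\theta+\epsilon}(X^{\theta+\epsilon})-F^{\theta}(X^{\theta})\,dr$, applies the Burkholder--Davis--Gundy inequality to the martingale part, and then \emph{does} recover linearity in $\epsilon$ despite the square root in BDG: the quadratic variation is a sum of the counting processes $N_{k,2}+N_{k,3}$, hence integer-valued, so $\sqrt{z}\le z$ gives $\E\,[M^{\theta,\epsilon}]_t^{1/2}\le \E\,[M^{\theta,\epsilon}]_t$, which is linear in the integrated intensities. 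Your worry that ``routing through the quadratic variation introduces a square root'' is therefore addressable, and the paper addresses it exactly this way; your alternative --- skipping the centering entirely and dominating $\sup_{s\le t}|D(s)|$ by the monotone sum $\sum_k|\zeta_k|\bigl(Y_{k,2}(\int_0^t b_k)+Y_{k,3}(\int_0^t c_k)\bigr)$, then using only the compensator identity $\E\,Y_{k,i}(\int_0^t\mu)=\E\int_0^t\mu$ --- is more elementary (no BDG at all) and exploits the same underlying fact, namely that the decoupled part of the dynamics is driven by counting processes whose expected size is linear in the intensity. The identity $b_k+c_k=|\lambda_k^{\theta+\epsilon}(X^{\theta+\epsilon})-\lambda_k^{\theta}(X^{\theta})|$, the triangle-inequality split into a $K_2\epsilon$ piece and a $K_1|D|$ piece, and the concluding Gronwall step coincide with the paper's; your localization via $\tau_n$ supplies the integrability the paper leaves implicit. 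One trade-off: the paper's martingale-plus-drift formulation reuses the decomposition \eqref{eq:main_diff2} verbatim in the $L^2$ bound of Lemma \ref{lem:L2}, where your uncentered domination would not give the right order (squaring the counting-process bound loses the linearity in $\epsilon$), so you would still need the martingale decomposition there.
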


\begin{proof}
	Let $T>0$. For any $s \ge 0$
	\begin{align}
		X^{\theta + \epsilon}(s) - X^{\theta}(s) &= M^{\theta,\epsilon}(s) + \int_0^s F^{\theta + \epsilon}(X^{\theta + \epsilon}(r)) - F^{\theta}(X^{\theta}(r)) dr,
		\label{eq:main_diff2}
	\end{align}
where $M^{\theta,\epsilon}$ is a martingale with quadratic covariation
\begin{align*}
	[M^{\theta,\epsilon}]_t &= \sum_k \left(N^{\theta,\epsilon}_{k,2}(t) + N^{\theta,\epsilon}_{k,3}(t)\right) \zeta_k \zeta_k^T,
\end{align*}
where
\begin{align*}
	N^{\theta,\epsilon}_{k,2}(t) &\eqdef Y_{k,2} \left( \int_0^t \lambda_k^{\theta + \epsilon}(X^{\theta + \epsilon}(s)) - \lambda_k^{\theta + \epsilon}(X^{\theta + \epsilon}(s)) \wedge  \lambda_k^{\theta}(X^{\theta}(s))ds \right)\\
	N^{\theta,\epsilon}_{k,3}(t) &\eqdef Y_{k,3}\left( \int_0^t \lambda_k^{\theta}(X^{\theta}(s)) - \lambda_k^{\theta + \epsilon}(X^{\theta + \epsilon}(s)) \wedge  \lambda_k^{\theta}(X^{\theta}(s))ds \right).
\end{align*}
Therefore, for $s \le t$
\begin{align*}
	|X^{\theta + \epsilon}(s) - X^{\theta}(s)| &\le \sup_{r \le t} |M^{\theta,\epsilon}(r)| + \int_0^s |F^{\theta + \epsilon}(X^{\theta + \epsilon}(r)) - F^{\theta}(X^{\theta + \epsilon}(r)) | dr\\
	&\hspace{.6in} + \int_0^s |F^{\theta}(X^{\theta + \epsilon}(r)) - F^{\theta}(X^{\theta}(r)) | dr\\
	&\le  \sup_{r \le t} |M^{\theta,\epsilon}(r)| + K_2 \epsilon s + K_1 \int_0^s |X^{\theta + \epsilon}(r) - X^{\theta}(r) | dr\\
	&\le  \sup_{r \le t} |M^{\theta,\epsilon}(r)| + K_2 \epsilon t + K_1 \int_0^t\sup_{u \le r} |X^{\theta + \epsilon}(u) - X^{\theta}(u) | dr,
\end{align*}
where $K_1,K_2$ are the constants of Assumption \ref{assump:GL} and \ref{assump:theta}, respectively.
As the above inequality holds for all $s\le t$, we have
\begin{equation}\label{eq:L1_exp}
\sup_{s\le t} |X^{\theta + \epsilon}(s) - X^{\theta}(s)| \le   \sup_{r \le t} |M^{\theta,\epsilon}(r)| + K_2 \epsilon t + K_1 \int_0^t\sup_{u \le r} |X^{\theta + \epsilon}(u) - X^{\theta}(u) | dr.
\end{equation}
By the Burkholder-Davis-Gundy inequality and the fact that $\sqrt{z} \le z$ for all nonnegative integers, we have the existence of a $C_2>0$ for which
\begin{align}\label{eq:need_L1}
\begin{split}
	\E& \sup_{r \le t} |M^{\theta,\epsilon}(r)| \le C_2 \sum_k \int_0^t \E | \lambda_k^{\theta+ \epsilon}(X^{\theta +\epsilon}(u))  -  \lambda_k^{\theta}(X^{\theta }(u)) | du\\
	&\le C_2 \sum_k\bigg[ \int_0^t \E | \lambda_k^{\theta+ \epsilon}(X^{\theta +\epsilon}(u))  -  \lambda_k^{\theta}(X^{\theta +\epsilon}(u)) | du + \int_0^t \E |  \lambda_k^{\theta}(X^{\theta +\epsilon}(u))   -  \lambda_k^{\theta}(X^{\theta }(u)) | du\bigg]\\
	&\le C_3 \epsilon t + C_4 \int_0^t \E |X^{\theta +\epsilon}(u)  -  X^{\theta }(u) | du\\
	& \le  C_3 \epsilon t + C_4 \int_0^t\E \sup_{u \le r} |X^{\theta +\epsilon}(u)  -  X^{\theta }(u) | dr,
	\end{split}
\end{align}
where $C_3$ and $C_4$ are constants independent of $\epsilon$ or $T$, and depend linearly on $M$ (the number of reactions).
Taking expectations of \eqref{eq:L1_exp}, applying \eqref{eq:need_L1}, and using Gronwall's inequality gives the desired result.

\end{proof}

\begin{lemma}\label{lem:L2}
	Suppose $(X^{\theta + \epsilon},X^{\theta})$ satisfy \eqref{eq:main_coupling} with our running Assumptions \ref{assump:GL}  and \ref{assump:theta}. Then, for $T>0$ there is a $C_{T,M} > 0$ for which
	\begin{equation*}
		\E \sup_{t\le T} \left| X ^{\theta + \epsilon} (t) - X^{\theta}(t)\right|^2 \le C_{T,M} \epsilon.
	\end{equation*}
\end{lemma}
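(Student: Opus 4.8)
The plan is to reuse the semimartingale decomposition \eqref{eq:main_diff2} already established in the proof of Lemma \ref{lem:L1}, but now to square it and feed in the $L^1$ bound we have just proven. Writing $u(t) \eqdef \E \sup_{s \le t}|X^{\theta+\epsilon}(s) - X^{\theta}(s)|^2$, I would start from the elementary bound
\begin{equation*}
\sup_{s\le t}\big|X^{\theta+\epsilon}(s) - X^{\theta}(s)\big|^2 \le 2\sup_{s\le t}\big|M^{\theta,\epsilon}(s)\big|^2 + 2\left(\int_0^t \big|F^{\theta+\epsilon}(X^{\theta+\epsilon}(r)) - F^{\theta}(X^{\theta}(r))\big|\,dr\right)^2,
\end{equation*}
and then control the martingale term and the drift term separately before closing with Gronwall's inequality.

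For the martingale term, Doob's $L^2$ maximal inequality gives $\E\sup_{s\le t}|M^{\theta,\epsilon}(s)|^2 \le 4\,\E\,\mathrm{trace}\,[M^{\theta,\epsilon}]_t$, and since each auxiliary counting process $N^{\theta,\epsilon}_{k,i}$ has expectation equal to that of its integrated intensity,
\begin{equation*}
\E\,\mathrm{trace}\,[M^{\theta,\epsilon}]_t = \sum_k |\zeta_k|^2\, \E\int_0^t \Big(\lambda_k^{\theta+\epsilon}(X^{\theta+\epsilon}(s)) + \lambda_k^{\theta}(X^{\theta}(s)) - 2\,\lambda_k^{\theta+\epsilon}(X^{\theta+\epsilon}(s))\wedge\lambda_k^{\theta}(X^{\theta}(s))\Big)\,ds.
\end{equation*}
The key observation is the identity $a + b - 2(a\wedge b) = |a-b|$, which collapses the combined rate of the two unshared Poisson processes into exactly $|\lambda_k^{\theta+\epsilon}(X^{\theta+\epsilon}(s)) - \lambda_k^{\theta}(X^{\theta}(s))|$. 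By the same triangle-inequality split and Assumptions \ref{assump:GL} and \ref{assump:theta} used in \eqref{eq:need_L1}, this is at most $K_2\epsilon + K_1|X^{\theta+\epsilon}(s) - X^{\theta}(s)|$, so taking expectations and invoking Lemma \ref{lem:L1} bounds the entire martingale term by $C\epsilon t$.

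For the drift term, the triangle-inequality split gives the integrand bound $K_2\epsilon + K_1|X^{\theta+\epsilon}(r) - X^{\theta}(r)|$; applying $(a+b)^2 \le 2a^2 + 2b^2$ together with Cauchy--Schwarz in the form $\left(\int_0^t g\,dr\right)^2 \le t\int_0^t g^2\,dr$ yields a bound of the shape $C\epsilon^2 t^2 + C t\int_0^t u(r)\,dr$. Since $\epsilon^2 \le \epsilon$ for $\epsilon<1$, combining the two estimates produces the integral inequality $u(t) \le C_1\epsilon + C_2\int_0^t u(r)\,dr$ with $C_1,C_2$ depending on $T$ and $M$, and Gronwall's inequality then delivers $u(T) \le C_{T,M}\epsilon$.

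The only step that is more than bookkeeping is the martingale estimate, and this is exactly where the order-$\epsilon$ (rather than order-$\epsilon^2$) conclusion originates: the quadratic variation of $M^{\theta,\epsilon}$ is driven \emph{only} by the unshared portions of the intensities, whose combined rate is $|\lambda^{\theta+\epsilon}-\lambda^{\theta}|$, and this is $O(\epsilon)$ in expectation precisely because of Lemma \ref{lem:L1}. This is why the ordering of the two lemmas matters — without first substituting the $L^1$ bound, the state-difference contribution to the quadratic variation would be left uncontrolled. It is worth emphasizing that it is the martingale term, not the harmless $\epsilon^2 t^2$ drift contribution, that prevents one from upgrading the estimate to $O(\epsilon^2)$.
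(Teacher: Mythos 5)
Your proposal is correct and follows essentially the same route as the paper: square the decomposition \eqref{eq:main_diff2}, bound the drift term via Cauchy--Schwarz and Gronwall, and control the martingale term through its expected quadratic variation, which reduces to the integrated unshared intensity $|\lambda_k^{\theta+\epsilon}(X^{\theta+\epsilon})-\lambda_k^{\theta}(X^{\theta})|$ and is then made $O(\epsilon)$ by invoking the bound \eqref{eq:need_L1} together with Lemma \ref{lem:L1}. The paper states this only as a two-line sketch; your write-up supplies exactly the details it omits, including the correct observation that the $L^1$ lemma must come first because the quadratic variation is only linear, not quadratic, in the intensity gap.
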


\begin{proof}
  Returning to \eqref{eq:main_diff2} in the proof of Lemma \ref{lem:L1}, we have that
  \begin{align*}
  	|X^{\theta + \epsilon}(s) - X^{\theta}(s)|^2 &\le  2|M^{\theta,\epsilon}(s)|^2 + 2T \int_0^s |F^{\theta + \epsilon}(X^{\theta + \epsilon}(r)) - F^{\theta}(X^{\theta}(r)) |^2 dr.
  \end{align*}
%  The integrated intensities are stopping times for the Poisson processes with respect to a natural filtration, see \cite{AndersonGangulyKurtz}.  Also, recall that for any stopping time $\tau$,
%  \begin{equation*}
%  	\E Y_k(\tau)^{p/2} = O(\tau), \text{ as } \tau \to 0.
%  \end{equation*}
  The proof is now essentially the same as that of Lemma \ref{lem:L1}, though  \eqref{eq:need_L1} in combination with Lemma \ref{lem:L1} is used to bound the martingale term.
\end{proof}

Note that the expected difference of the $p$th moment, for any $p \ge 1$, can be estimated in the same manner as above.

\begin{example}
	To demonstrate the above Lemmas, we consider the following simple example:
	\begin{equation*}
		\emptyset \overset{\theta}{\to} S,
	\end{equation*}
	where the parameter of interest is the rate constant $\theta$.  For this example we have that $\lambda^{\theta}(x) \equiv \theta$, and so
	\begin{align*}
		X^{\theta + \epsilon}(t) &= Y_1( \theta t) + Y_2(\epsilon t)\\
		X^{\theta}(t) &= Y_1( \theta t).
	\end{align*}
	Hence $X^{\theta+ \epsilon}(t) - X^{\theta}(t) = Y_2(\epsilon t)$, and the statements of Lemmas \ref{lem:L1} and \ref{lem:L2} follow immediately.  Further, the lemmas are shown to be sharp.
\end{example}

\begin{proof}(of Theorem \ref{thm:main}.)
	By Taylor's theorem combined with our assumption on $f$,  we have that for some $C_f>0$
	\begin{align*}
		  \left( f(X ^{\theta + \epsilon} (t)) - f(X^{\theta}(t))\right)^2 &\le C_f |X^{\theta + \epsilon}(t) - X^{\theta}(t)|^2,
	\end{align*}
	and the result follows by application of Lemma \ref{lem:L2}.
\end{proof}

We return now to \eqref{eq:difference} and \eqref{eq:d} and note that with $X^{\theta+\epsilon/2},X^{\theta-\epsilon/2}$ generated via the coupling proposed here,
\begin{align*}
	\mathsf{Var}(d_{[i]}(\epsilon)) &= O(\epsilon^{-1})\\
	\mathsf{Var}(D_R(\epsilon)) &= O(R^{-1}\epsilon^{-1}),
\end{align*}
which are an order of magnitude lower, in terms of $\epsilon$, than the respective variances when the processes are generated independently.  As discussed at the end of Section \ref{sec:theWhy}, this fact leads to a decrease in the computational work (and simulation time) required to solve a given problem to a desired tolerance by a factor of $\epsilon$, yielding potentially enormous savings.

\section{Numerical examples}
\label{sec:examples}

We compare our method with existing methods on three different models: a basic model for the production of mRNA and proteins, an $M/M/\infty$ queue, and a genetic toggle switch from \cite{Khammash2010}.  Because the common reaction path method of \cite{Khammash2010} tends to perform at least as well as the usual implementation of common random numbers with Gillespie's algorithm, we choose to only include the Common Reaction Path method in our comparison (except for one plot in Numerical Example 2 to demonstrate the decoupling alluded to at the end of Section \ref{sec:results}).

\begin{num_ex}

Consider the  model of gene transcription and translation
 \begin{equation}\label{ex1}
 	G \overset{2}{\to} G + M, \quad M \overset{10}{\to} M + P, \quad M \overset{\theta}{\to} \emptyset, \quad P \overset{1}{\to} \emptyset,
 \end{equation}
 where a single gene is being translated into mRNA, which is then being transcribed into proteins. The final two reactions represent degradation of the mRNA and protein molecules, respectively.  Assuming that there is a single gene copy, the stochastic equation \eqref{eq:main} for this model is
 \begin{align}
 \begin{split}
 	X^{\theta}(t) &= X^{\theta}(0) + Y_1(2t) \left( \begin{array}{cc}
		1\\
		0
	\end{array} \right) + Y_2 \left( \int_0^t 10 X^{\theta}_1(s) ds\right)\left( \begin{array}{cc}
		0\\
		1
	\end{array} \right) + Y_3 \left( \int_0^t \theta X^{\theta}_1(s) ds\right)\left( \begin{array}{cc}
		-1\\
		0
	\end{array} \right)\\
	&\hspace{.2in} + Y_4 \left( \int_0^t  X^{\theta}_2(s) ds\right)\left( \begin{array}{cc}
		0\\
		-1
	\end{array} \right),
	\end{split}
	\label{ex:Gene}
 \end{align}  
 where $X^{\theta}_1(t)$ and $X^{\theta}_2(t)$ give the number of mRNA and protein molecules at time $t$, respectively, and $Y_1,Y_2$ are independent unit-rate Poisson processes.
 Suppose the rate constant $\theta$ is of interest to us and we believe that $\theta \approx 1/4$.  We would like to estimate the sensitivity of the mean number of protein molecules at time $T = 30$, say, with respect to the parameter $\theta \approx 1/4$.  Here, it is a straightforward calculation to find that
 \begin{align*}
 	\E X_2^\theta(30)\bigg|_{\theta = 1/4} &\approx 79.941\qquad \text{and} \qquad 	\frac{d}{d\theta} \E X_2^{\theta}(30) \bigg|_{\theta = 1/4} \approx -318.073,
 \end{align*}
 if the initial condition is $X^{\theta}(0) = [0, 0]^T$.
 Defining 
  \begin{equation*}
 	J(\theta) \eqdef \frac{d}{d \theta} \E \left[ X^\theta_2(30)\right],
 \end{equation*}
 our goal is to efficiently estimate $J(1/4)$ and we compare the following methods on this problem:
 \begin{enumerate}[$(i)$]
 \item the usual crude Monte Carlo (CMC) estimator with independent samples, 
 \item the common reaction path (CRP) method of \cite{Khammash2010} using the coupled processes \eqref{eq:Rathinam},
 \item the coupled finite difference  (CFD) method being proposed in this paper using the coupling \eqref{eq:main_coupling},
 \item a Girsanov transformation method of Plyasunov and Arkin detailed in \cite{Plyasunov2007}.
 %\footnote{The main algorithm as stated in \cite{Plyasunov2007} is incorrect.  The correct algorithm can be inferred from the analysis in the appendix of \cite{Plyasunov2007}, where it is clear that the correct martingale to use as the weight function is $\int_0^t g(X(s)) dM_r(s)$, where we point the reader to \cite{Plyasunov2007} for notation.}
 \end{enumerate}
    For all simulations, we assume an initial condition of zero mRNA and zero protein molecules.  
     We will denote the number of sample paths used in the construction of the relevant estimators \eqref{eq:difference} via $R$ and the perturbation in the centered finite difference via $\epsilon$.  
     %Note that for the finite difference methods, there are actually two distinct paths generated for each choice of $i \le R$.  
      
      \begin{table}
 \begin{center}
 \begin{tabular}{|c|c|c|c|c||c|}\hline
 	Method & $R$& $\epsilon = 1/20$  & $\epsilon =1/100$  & \# updates  & CPU time\\ \hline \hline
	CMC  & 1,000 & -276.2 $\pm \ 46.3$   & -472.7.1 $\pm\ 237.3$ &    $\approx 8.4\times 10^6$ & $\approx$ 9.6 S\\ \hline 
	CRP  & 1,000 & -323.1 $\pm \ 18.4$  & -321.0 $\pm \ 60.2$ & $\approx 8.4 \times 10^6$ & $\approx$ 10.1 S  \\ \hline
	CFD  & 1,000 & -323.7  $\pm \ 8.7$  &  -333.8 $\pm \ 28.0$ & $\approx 4.4 \times 10^6$ &  $\approx$ 6.5 S   \\ \hline   \hline 
	CMC  & 10,000 & -324.8 $\pm \ 14.7$ &  -305.4 $\pm \ 74.3$ &  $\approx 8.4 \times 10^7$ & $\approx$ 98.8 S \\ \hline
	CRP  & 10,000 & -325.5 $\pm \ 5.8$ & -328.6 $\pm \ 18.6$  & $\approx 8.4 \times 10^7$ & $\approx$ 105.4 S \\ \hline 
	CFD  & 10,000 & -320.0 $\pm \ 2.8$  & -316.6 $\pm \ 8.9$ &  $\approx 4.4 \times 10^7$ & $\approx$ 64.9 S \\ \hline \hline
	CMC & 40,000&  -322.7 $\pm \ 7.5$ & -341.9 $\pm \  37.3$ & $\approx 3.4 \times 10^8$ & $\approx$ 395.3 S\\ \hline
	CRP  & 40,000 & -319.6 $\pm \ 2.9$ & -310.6 $\pm \ 9.3$ & $\approx 3.4 \times 10^8$ & $\approx $ 411.5 S  \\ \hline 
	CFD & 40,000 & -321.6 $\pm \ 1.4$   & 317.8 $\pm\  4.4$ & $\approx 1.8 \times 10^8$    & $\approx$ 263.3 S \\ \hline
 \end{tabular}
  \caption{95\% confidence intervals and computational complexity for $(i)$ crude Monte Carlo (CMC), $(ii)$ the common reaction path (CRP) method of \cite{Khammash2010}, and $(iii)$ the coupled finite difference method (CFD) proposed here, applied to \eqref{ex1} in order to approximate $J(1/4)$ for different choices of $R$ and $\epsilon$. The exact value is $J(1/4) =  -318.073$.  Note that the bias of the centered finite difference is apparent when $\epsilon = 1/20$ and $R = $ 40,000.}
 \label{table1}
 \end{center}
\end{table}

     In Table \ref{table1}, we provide the 95\% confidence intervals computed using the crude Monte Carlo method with independent paths (CMC), the common reaction path method (CRP), and the coupled finite difference method (CFD) for different choices of  $R$, the number of paths simulated, and $\epsilon$, the perturbation of $\theta$.  For each method and choice of $R$, we also provide: $(i)$  an approximate total number of steps (and random numbers used) over the course of the entire simulation, and $(ii)$ an approximation of the CPU time required.  These numbers, which quantify the computational work required from each method, are essentially independent of $\epsilon$, and the numbers provided are the average of the actual values for the two different values of $\epsilon$ for a given method and choice of $R$.  These numbers should be used as a reference for the computational complexity required by the different methods with the understanding that CPU time will depend greatly upon implementation (the author used Matlab for all computations, which were performed on an Apple machine with a 2.2 GHz Intel i7 processor).  In Table \ref{table2}, we provide similar data for the Girsanov transformation method of \cite{Plyasunov2007}.

 \begin{table}
 \begin{center}
	\begin{tabular}{|c|c|c|c|}\hline 
		$R$ & Approximation & \# updates & CPU Time \\ \hline \hline 
		1,000 & -441.5 $\pm \ 156.5$  & $4.2 \times 10^6$ & 5.2 S  \\ \hline
		10,000 & -324.2 $\pm \ 49.2$  & $4.2 \times 10^7$ & 54.5 S \\ \hline
		40,000 &-327.8 $\pm  25.1$ & $1.7 \times 10^8$ & 207.6 S\\ \hline
	\end{tabular}
 \caption{95\% confidence intervals and computational complexity for the Girsanov transformation method of \cite{Plyasunov2007} applied to \eqref{ex1} in order to approximate $J(1/4)$. The exact value is $J(1/4) =  -318.073$.}
 \label{table2}
 \end{center}
\end{table}

While Tables \ref{table1} and \ref{table2} demonstrate that the method being proposed here can produce a more accurate estimate in less CPU time than the other methods, a more important statistic is the CPU time needed for each method to achieve a desired tolerance.  Therefore, we applied each method until the 95\% confidence interval was $\pm \ 6.0$.  The finite difference methods were applied with a perturbation size of $\epsilon = 1/40$.  Table \ref{table3} provides the number of updates needed by each method combined with the CPU time needed on our machine. We see that the coupled finite difference (CFP) method was  approximately 9 times more efficient that the common reaction path method,  and vastly more  efficient than both the Girsanov transformation method and the crude Monte Carlo with independent samples.

\begin{table}
\begin{center}
	\begin{tabular}{|c|c|c|c|c|}\hline 
		Method & $R$ & Approximation & \# updates & CPU Time \\ \hline \hline 
		Girsanov & 689,600 & -312.1 $\pm \ 6.0$    & $2.9 \times 10^{9}$   & 3,506.6 S \\ \hline 
		CMC & 246,000 & -319.3 $\pm \ 6.0$  & $2.1 \times 10^9$ & 2,364.8 S\\ \hline
		CRP  & 25,980  & -316.7 $\pm\ 6.0$ & $2.2 \times 10^8$ & 270.9 S \\ \hline
		CFD  & 4,580 &-319.9 $\pm \ 6.0$  & $2.0 \times 10^7$ & 29.2 S \\ \hline
	\end{tabular}
\end{center}
\caption{Required $R$, \# updates, and CPU time for each method to provide a 95\% confidence of $\pm\ 6.0$.  Each finite difference method used $\epsilon = 1/40$. The exact value is $J(1/4) =  -318.073$.}
\label{table3}
\end{table}

 Next, we simulated the system \eqref{ex:Gene} 5,000 times using each of the different methods and plotted the variance of the estimators versus time up to $T = 60$, see Figure \ref{fig:VarTime1}.    The finite difference methods were computed with a perturbation of size $\epsilon = 1/40$.   We note that the variance of each of the finite difference methods appears to converge, though the limiting value for the coupled finite difference method being proposed here converges to a value that is approximately 6.5 times lower than that of the common reaction path method, and 52 times lower than crude Monte Carlo.   Also note that the variance of the Girsanov transform method grows linearly in time, as expected, and is quite large for even moderate values of time, $t$.  
   \begin{figure}
    \begin{center}
    \begin{multicols}{2}\phantom{.}\vspace{.4in}
    \subfloat[Coupled Finite Differences and Common Reaction Path]{\label{fig:CFD_CRP_gene}
      \includegraphics[height=3.5in]{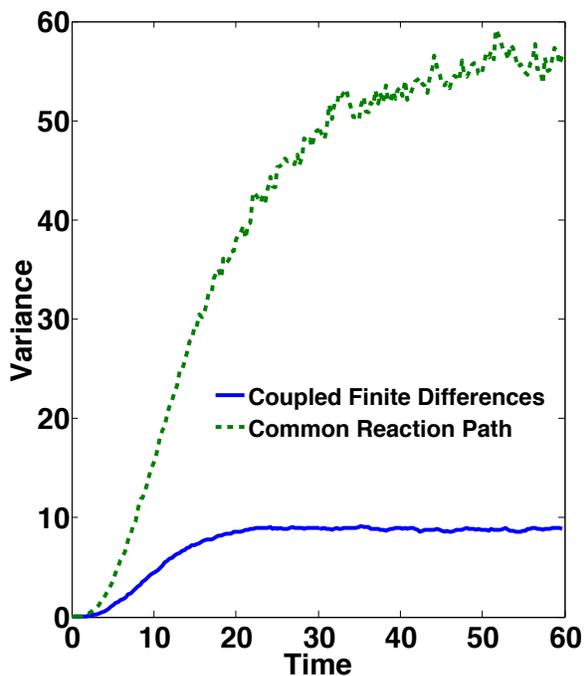}} \qquad
      \subfloat[Crude Monte Carlo] {\label{fig:cmc_gene}
        \includegraphics[height=2.7in]{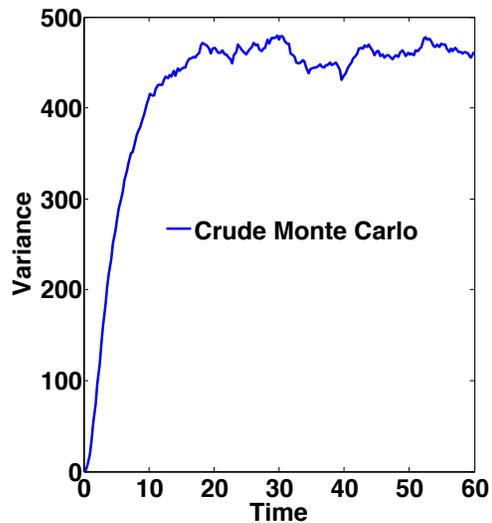}} 
        \qquad
      \subfloat[Girsanov Transformation]
      {\label{fig:Girsanov_gene}\includegraphics[height=2.7in]{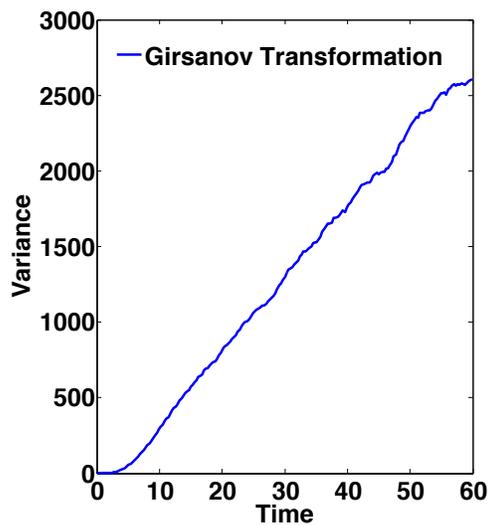}}
   \end{multicols}\end{center} %
    \caption{ Variance of the different estimators applied to \eqref{ex:Gene}.  For each, $R = $ 5,000 sample paths were used to construct the relevant estimators.  For each of the finite difference methods (figures \eqref{fig:CFD_CRP_gene} and \eqref{fig:cmc_gene}), a perturbation of $\epsilon = 1/40$ was used.  Note that the scales on the variance axis are dramatically different for the different methods.}
      \label{fig:VarTime1}
 \end{figure}
 
 \hfill $\square$
\end{num_ex}

\begin{num_ex}
We revisit  Example \ref{ex:mRNA}, which modeled an mRNA molecule being created and degraded (or an $M/M/\infty$ queue),
\begin{equation}\label{eq:MMinfinity}
	\emptyset \overset{\theta}{\underset{0.1}{\rightleftarrows}} M.
\end{equation}
We suppose that  we want to understand the sensitivity of the expected number of particles (or customers, in the queuing setting)  with respect to the parameter $\theta \approx 2$.  In Figure \ref{fig:2} we provide a plot of the variances of the different estimators as functions of time.  So as to demonstrate the different behaviors of the different estimators, the scales on both the time and variance axes are dramatically different for the different methods.  For each of the methods, we chose $R = $ 1,000, and used $\epsilon = 1/100$ for the perturbation methods.  Recall that in Example \ref{ex:mRNA}, we proved that the variance of the difference between $X^{\theta + \epsilon}$ and $X^\theta$ will converge to $\epsilon/0.1$ if they are coupled using \eqref{eq:main_coupling}.  Therefore, the variance of the estimator \eqref{eq:difference} will converge to 
\begin{equation*}
	\frac{\epsilon}{0.1}\frac{1}{\epsilon^2}\frac{1}{R} = \frac{1}{R} \frac{1}{0.1 \epsilon},
\end{equation*}
as $t \to \infty$.
In our case, $\epsilon = 1/100$ and $R = $ 1,000, and the above value is equal to one.  This predicted behavior is born out in Figure \eqref{fig:CFD_BD}.  Also in Example \ref{ex:mRNA}, we predicted (though did not prove) that after a long enough time the variance of both the common reaction path estimator (CRP) and Gillespie's algorithm plus common random numbers (CRN) should converge to the variance of the crude Monte Carlo estimator constructed with independent paths.  In essence, we are predicting that the processes will decouple after a long enough time and behave independently.  This behavior is demonstrated in Figures \eqref{fig:cmc_BD} and \eqref{fig:CRP_BD} (for CRP) and \eqref{fig:GILL_CRN} (for Gillespie + CRN), though we note that the time for  a full decoupling is quite large in this example.  Also note that we plotted the variance of the estimator for the common reaction path method both up to time $T = 100$ and $T = $ 10,000 so as to demonstrate the different behaviors exhibited.  Finally, we point out that the full ``decoupling'' of the CRP method described here does not seem to take place in Example 1.  The estimator built using the Girsanov transformation method exhibits a variance that grows linearly in time.

  \begin{figure}
    \begin{center}
      \subfloat[Crude Monte Carlo] {\label{fig:cmc_BD}
        \includegraphics[height=2.3in]{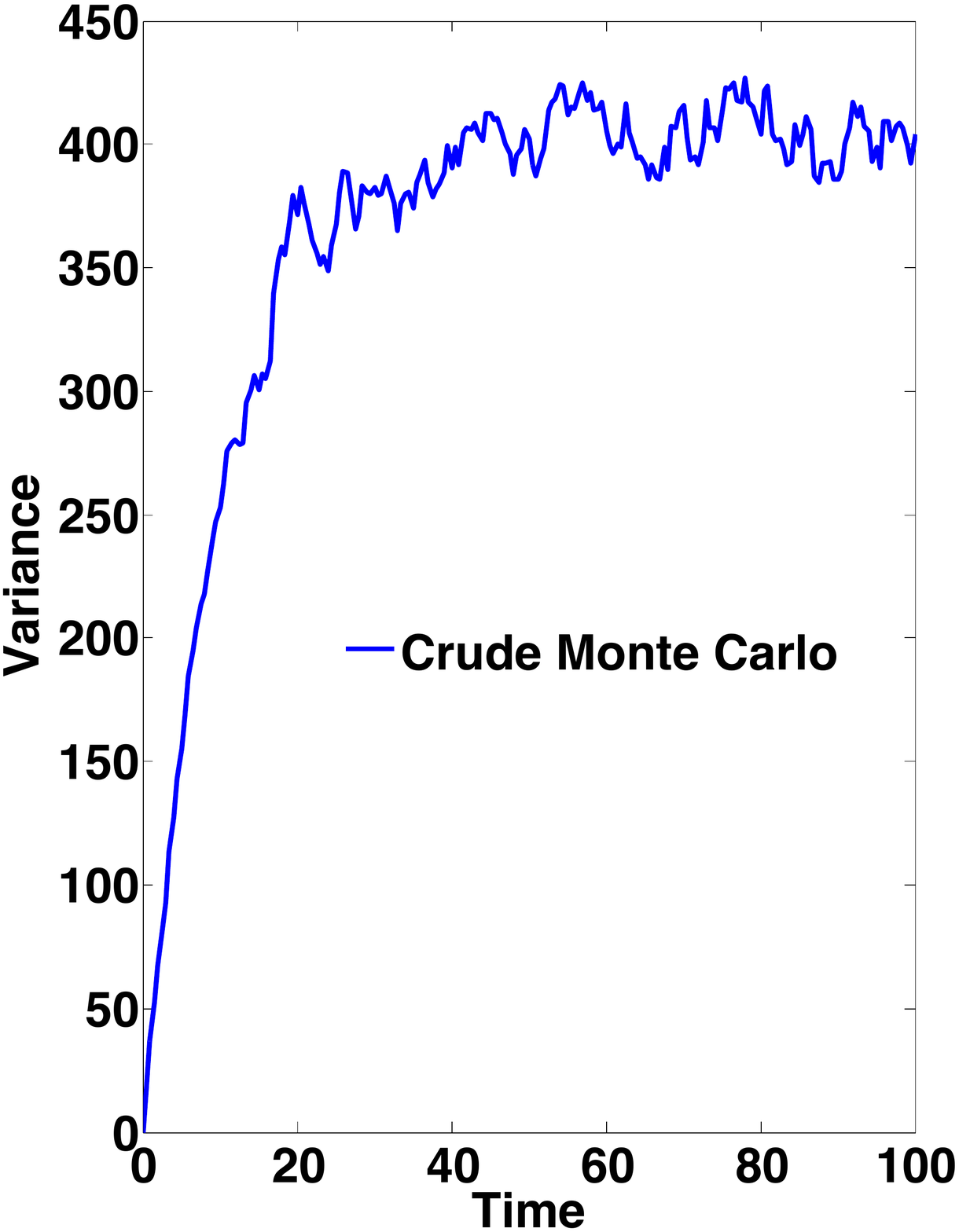}} 
        \qquad
      \subfloat[Girsanov Transformation]
      {\label{fig:Girsanov_BD}\includegraphics[height=2.3in]{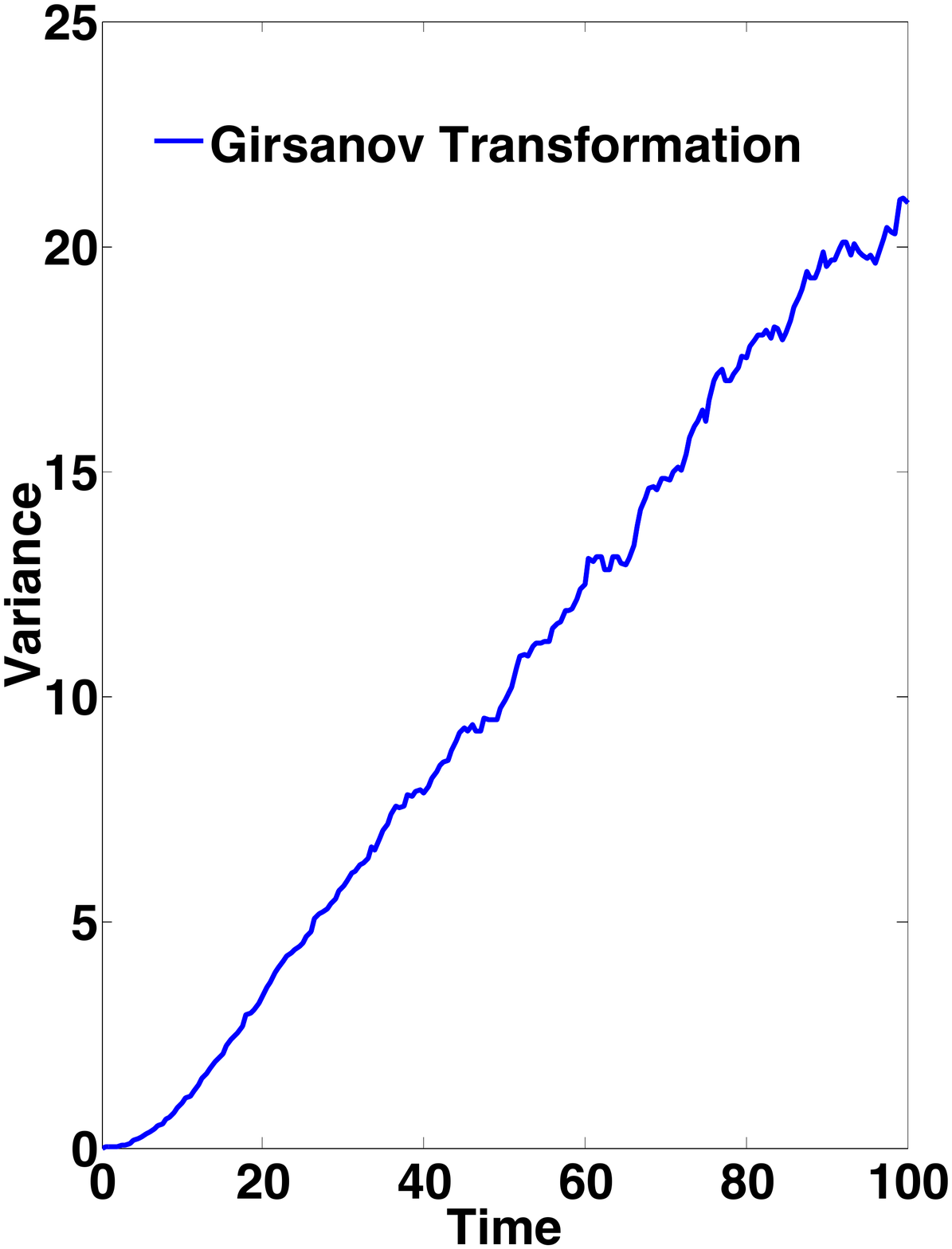}}
      \qquad
      \subfloat[Common Reaction Path, $T=10,000$]
      {\label{fig:CRP_BD}\includegraphics[height=2.3in]{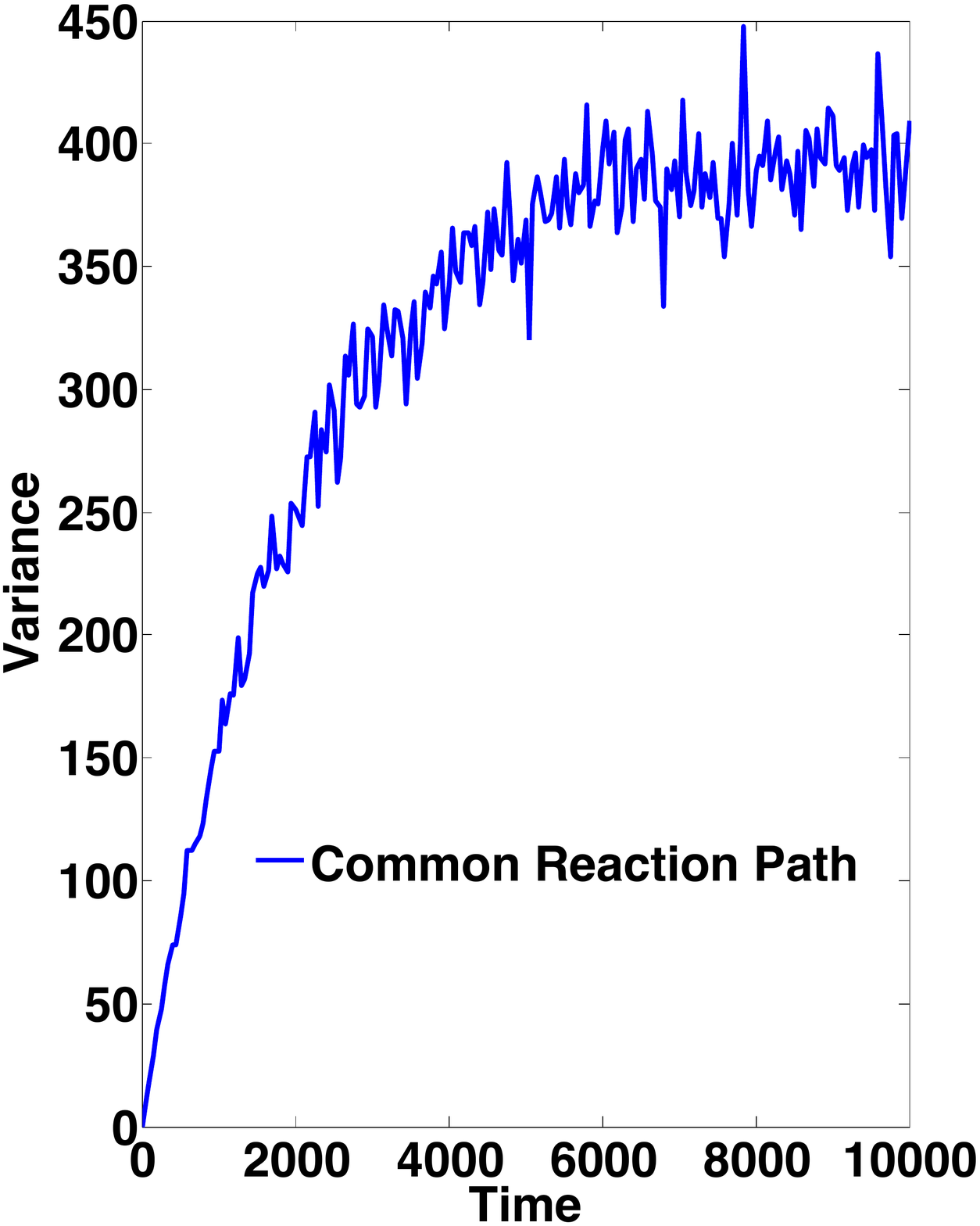}}\qquad
      \subfloat[Common Reaction Path, $T=100$]
      {\label{fig:CRP_100_BD}\includegraphics[height=2.3in]{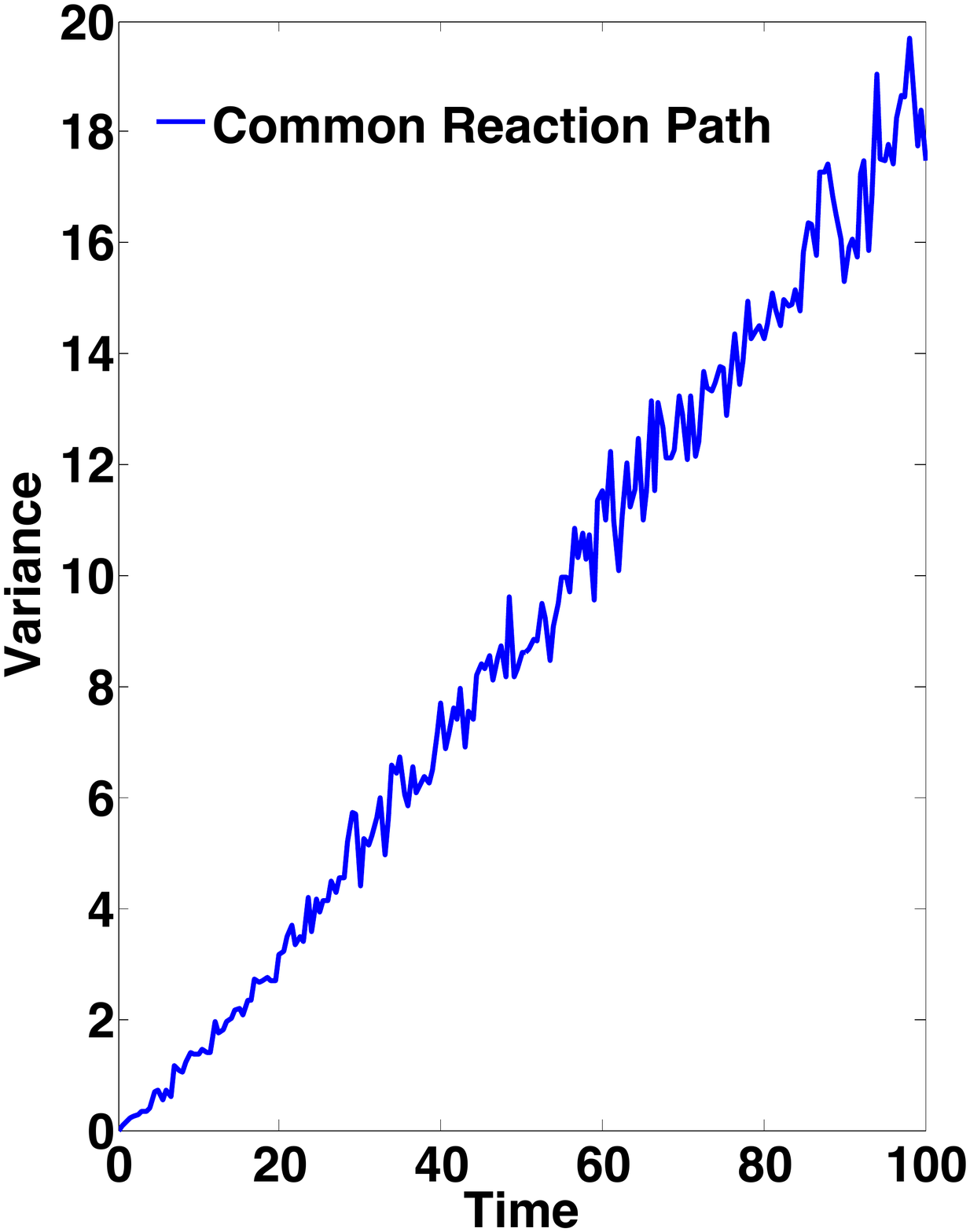}}\qquad
      \subfloat[Coupled Finite Differences]
      {\label{fig:CFD_BD}\includegraphics[height=2.3in]{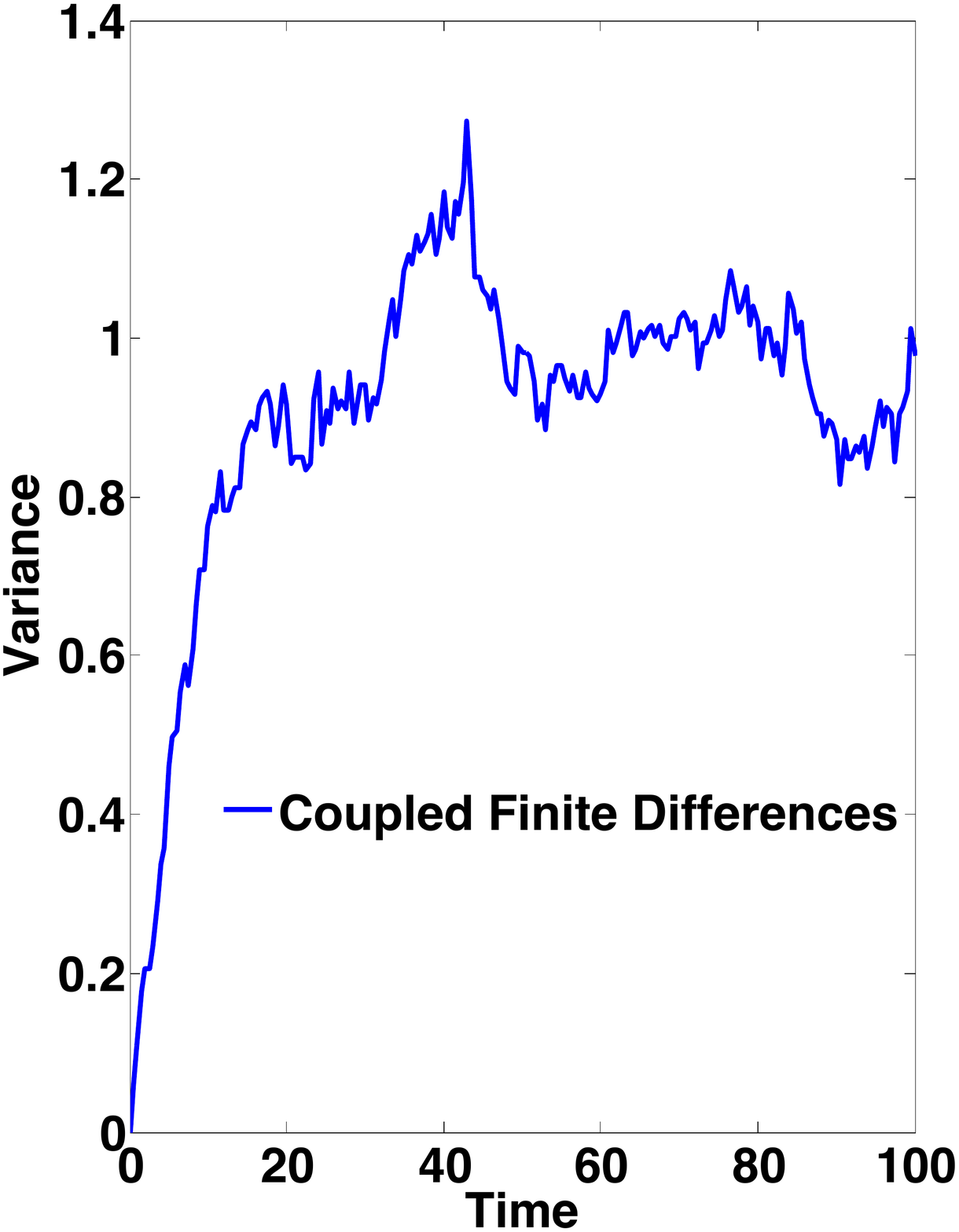}}\qquad
      \subfloat[Gillespie + Common Random Numbers]
      {\label{fig:GILL_CRN}\includegraphics[height=2.3in]{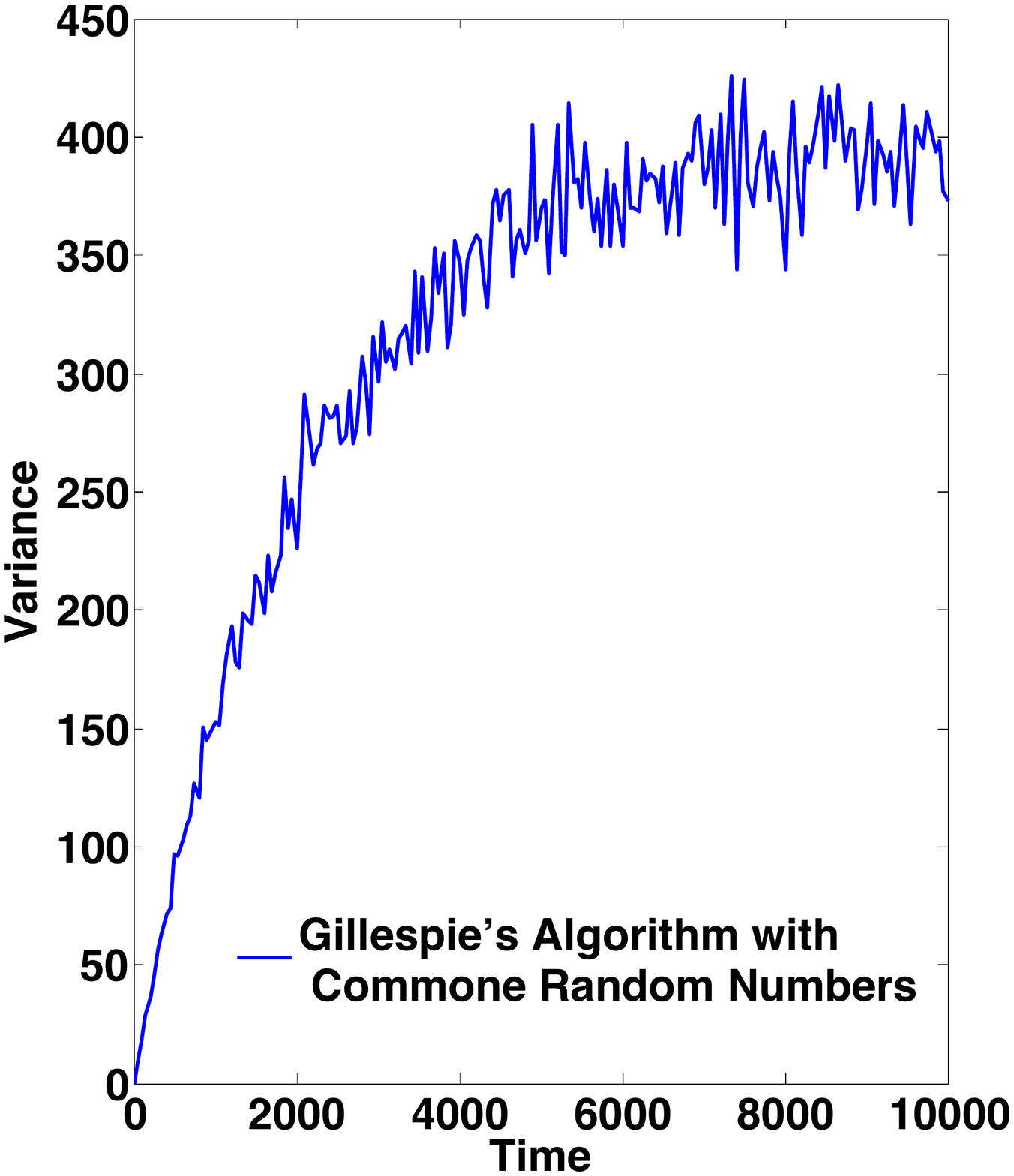}}
    \end{center} %
    \caption{ Variance of the different estimators applied to \eqref{eq:MMinfinity}.  $R = $ 1,000 sample paths were used to construct the relevant estimators.  For each of the finite difference methods (plots \eqref{fig:cmc_BD}, \eqref{fig:CRP_BD}, \eqref{fig:CRP_100_BD}, \eqref{fig:CFD_BD}, and \eqref{fig:GILL_CRN}), a perturbation of $\epsilon = 1/100$ was used.  So as to demonstrate the different behaviors of the different estimators, the scales on both the time and variance axes are dramatically different for the different methods.  Also, note that we plotted the variance of the estimator for the common reaction path method both up to time $T = 100$ and $T = $ 10,000 so as to demonstrate the different behaviors exhibited.  Note that the CRP and Gillespie + Common Random Number methods appear equivalent for this example.}
      \label{fig:2}
 \end{figure}

 Next, we considered the sensitivity to the decay parameter at $0.1$.  That is we considered
 \begin{equation}\label{eq:MMinfinity2}
	\emptyset \overset{2}{\underset{\theta}{\rightleftarrows}} M,
\end{equation}
with $\theta = 0.1$.
 In Figure \ref{fig:CFD_CRP_BD_New} we provide a plot of the variance of the Coupled Finite Difference estimator versus the Common Reaction Path estimator.  Each plot was generated using 1,000 sample paths in which a perturbation of $\epsilon = 1/100$ was used.  We again see the lower variance exhibited by the Coupled Finite Difference Estimator, though the difference is now less dramatic.
 \hfill $\square$
   \begin{figure}
    \begin{center}
      \includegraphics[height=3.0in]{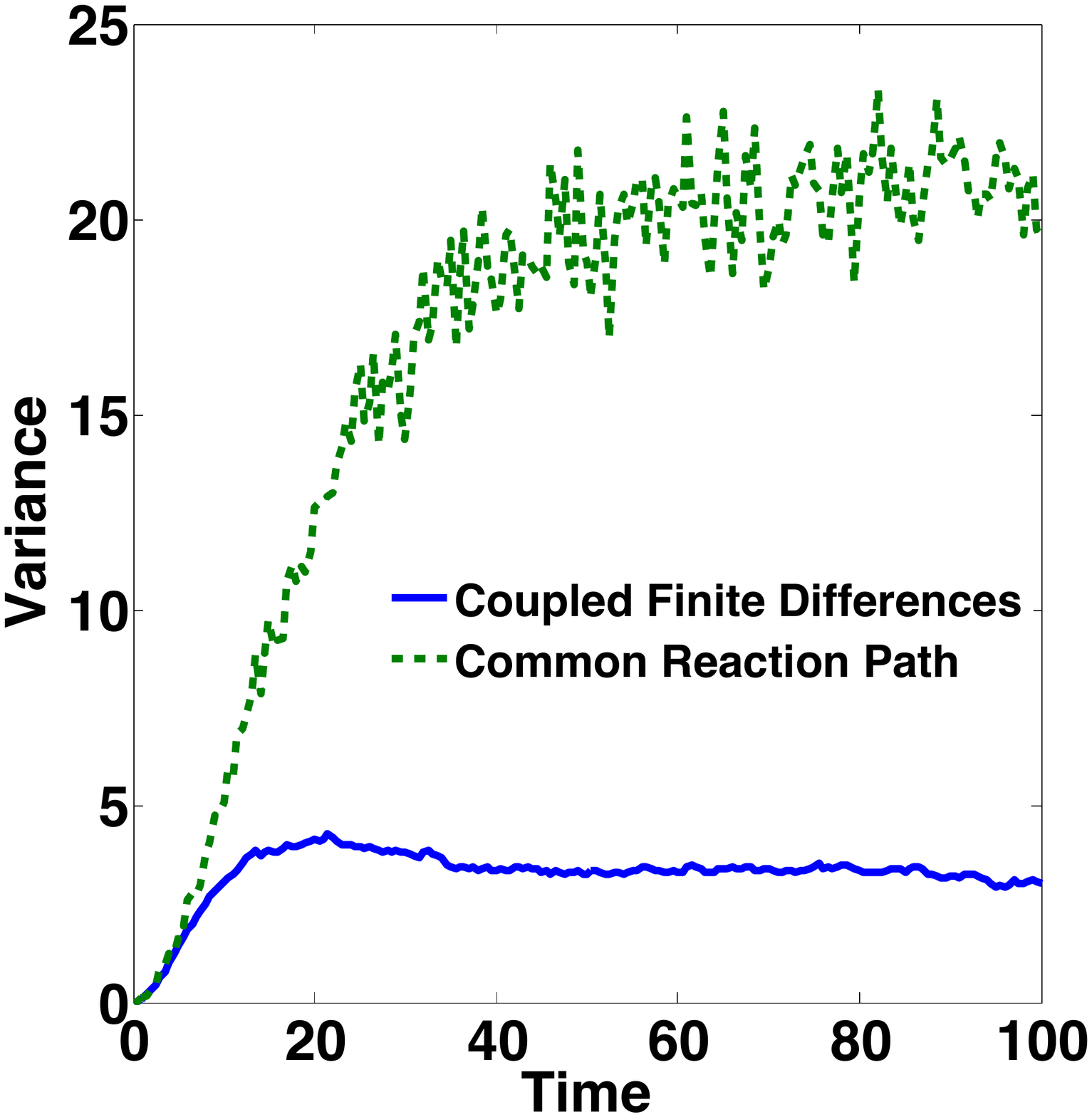}
    \end{center} %
    \caption{ Time plot of the variance of the Coupled Finite Difference estimator versus the Common Reaction Path estimator for the  model \eqref{eq:MMinfinity2} with decay rate perturbed.  Each plot was generated using 1,000 sample paths. A perturbation of $\epsilon = 1/100$ was used.}
      \label{fig:CFD_CRP_BD_New}
 \end{figure}
 \end{num_ex}

 \begin{num_ex}
 	We consider a model for a genetic toggle switch found in \cite{Khammash2010}:
	\begin{align}\label{ex:toggle}
		\emptyset \underset{\lambda_2(X)}{\overset{\lambda_1(X)}{\rightleftarrows}} X_1,
	 \quad \emptyset \underset{\lambda_4(X)}{\overset{\lambda_3(X)}{\rightleftarrows}} X_2,
\end{align}
 with intensity functions
 \begin{align*}
 	\lambda_1(X(t)) &= \frac{\alpha_1}{1 + X_2(t)^{\beta}}, \quad \lambda_2(X(t)) = X_1(t)\\
	\lambda_3(X(t)) &= \frac{\alpha_2}{1 + X_1(t)^{\gamma}}. \quad \lambda_4(X(t)) = X_2(t),
 \end{align*}
 and parameter choice 
 \[
 	\alpha_1 = 50, \quad \alpha_2 = 16, \quad \beta = 2.5, \quad \gamma = 1.
 \]
 We begin the process with initial condition $[0, 0]$ and consider the sensitivity of $X_1$ as a function of $\alpha_1$.  In Figure \ref{fig:toggle} we provide a plot of the variance of the Coupled Finite Difference estimator versus the Common Reaction Path estimator as a function of time.  Each plot was generated using 10,000 sample paths in which a perturbation of $\epsilon = 1/10$ was used.  We see that the Coupled Finite Difference method performs substantially better for times $t> 5$, whereas the Common Reaction Path method performs better for shorter times, $t<5$.
 
  \begin{figure}
    \begin{center}
    \subfloat[Variance to time $T=40$]
    {\label{fig:toggle1}\includegraphics[height=3.0in]{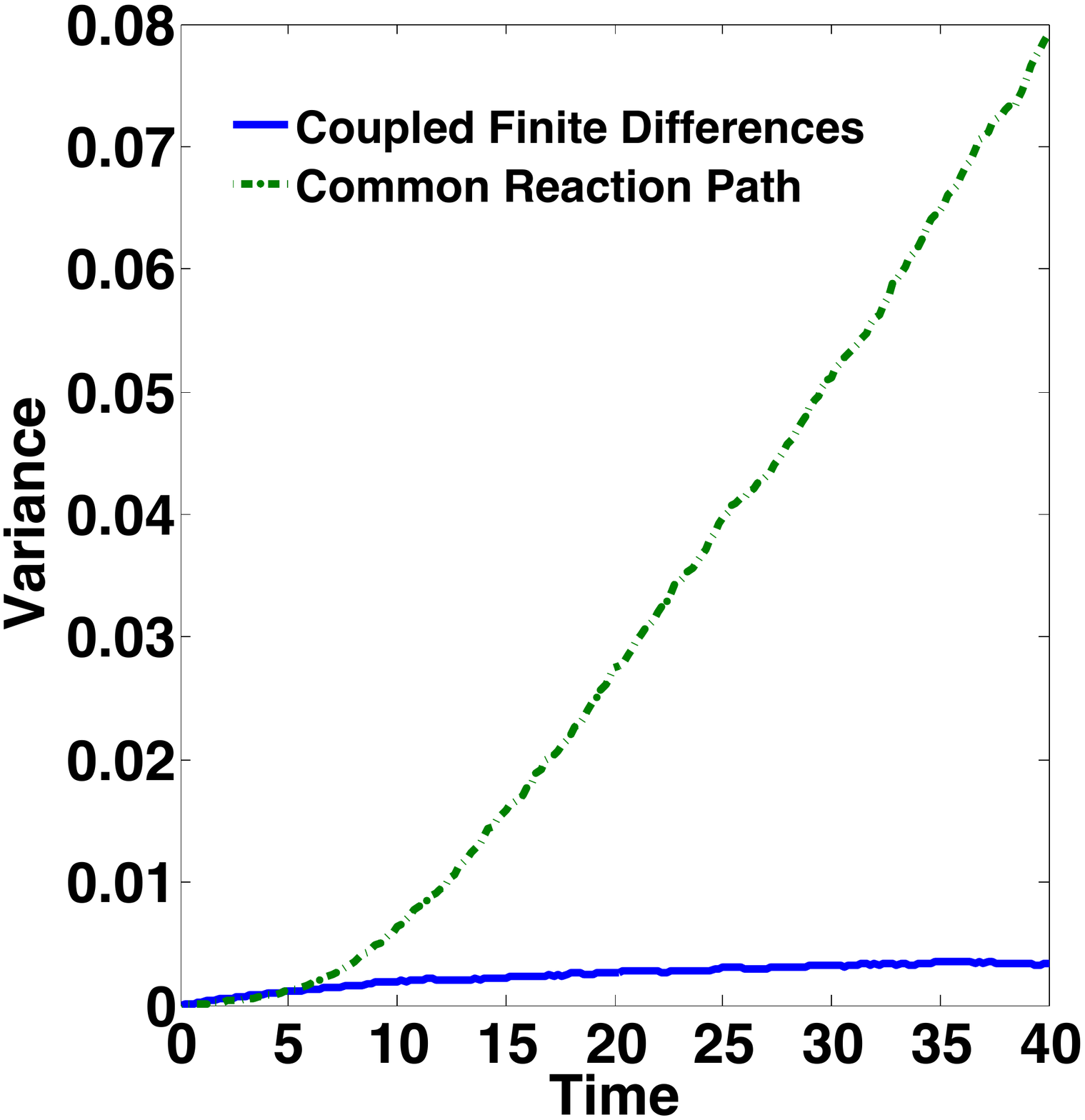}} \quad
     \subfloat[Variance to time $T=7$]
    {\label{fig:toggle2}\includegraphics[height=3.0in]{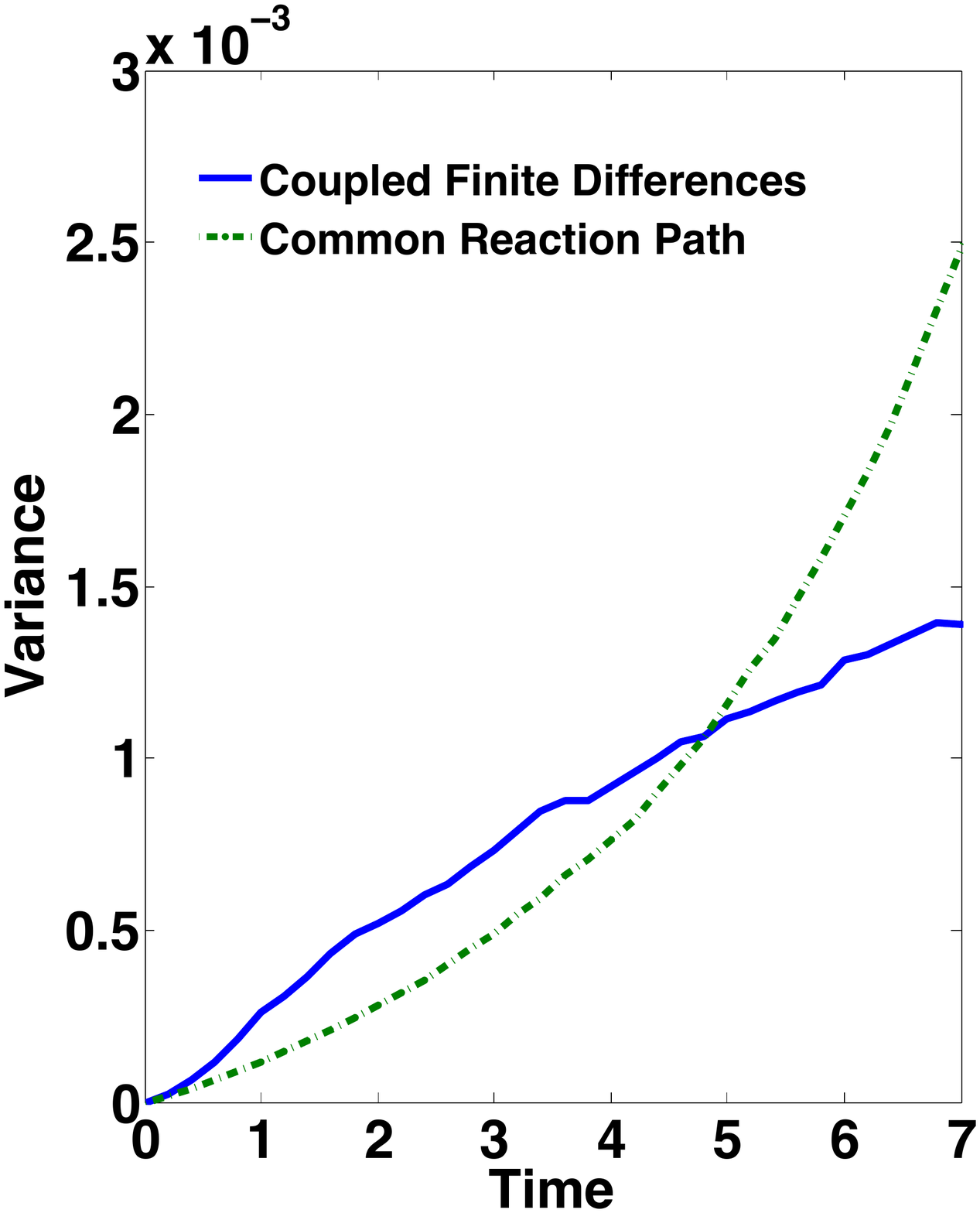}}
    \end{center} %
    \caption{ Time plot of the variance of the Coupled Finite Difference estimator versus the Common Reaction Path estimator for the  model \eqref{ex:toggle}.  Each plot was generated using 10,000 sample paths. A perturbation of $\epsilon = 1/10$ was used. It is worth noting that here the Common Reaction Path estimator outperforms the Coupled Finite Difference estimator for times $t < 5$, though CFD still greatly outperforms CRP for larger times.}
      \label{fig:toggle}
 \end{figure}
 
 \end{num_ex}

%\section{Conclusions}
%
%We have developed a new finite difference method for the computation of sensitivities of stochastic models of biochemical reaction networks that is $(i)$ easy to implement, $(ii)$ analytically tractable, and $(iii)$ computationally efficient when compared with the currently available methods.  The method is quite general in the sense that it is applicable to essentially all discrete space continuous time Markov chains.  The method is comparable to the common reaction path method of \cite{Khammash2010}, though significant difference exist, as discussed in the body of this paper.  We feel that both the coupled finite difference (CFD) method developed here and the common reaction path (CRP) method are capable estimators for moderately intensive systems, though the CFD method will produce an estimator with smaller variance than the  CRP method.  However, when systems become more intensive in that 
%\begin{equation*}
%\int_0^t \lambda_k(X(s)) ds \gg 1,
%\end{equation*}
%for at least some choices of $k$, which are \textit{precisely} the systems for which efficient methods are needed, then the coupled finite differences method will be significantly more efficient than the common reaction path method.  We  believe  that the method developed here will become standard for computing parametric sensitivities in the stochastic chemical kinetic setting, and in any other setting in which continuous time Markov chain models are used.
%

\bibliographystyle{amsplain} \bibliography{sensitivity}

\end{document}